\newcommand{\inner}[3][]{( #2 , #3 )_{#1}}
\newcommand{\innerb}[3][]{\big(#2,#3\big)_{#1}}
\newcommand{\innerB}[3][]{\Big(#2,#3\Big)_{#1}}
\newcommand{\dual}[3][]{\langle #2 , #3\rangle_{#1}}
\newcommand{\diff}[1]{\,\mathrm{d}#1}
\newcommand{\R}{\mathbb{R}}
\newcommand{\N}{\mathbb{N}}
\newcommand{\F}{\mathcal{F}}
\renewcommand{\P}{\mathbf{P}}
\newcommand{\E}{\mathbf{E}}
\renewcommand{\L}{\mathcal{L}}
\renewcommand{\exp}[1]{\mathrm{e}^{#1}}
\newcommand{\exptext}[1]{\mathrm{exp}(#1)}
\newcommand{\exptextB}[1]{\mathrm{exp}\Big(#1\Big)}
\newcommand{\exptextbigg}[1]{\mathrm{exp}\bigg(#1\bigg)}
\newcommand{\domain}[1]{\mathcal{D}\left( #1 \right)}
\newcommand{\Lr}[2]{L_{#1}(#2)}
\DeclareMathOperator*{\argmin}{arg\,min}
\numberwithin{equation}{section}
\theoremstyle{plain}
\newtheorem{theorem}{Theorem}[section]
\newtheorem{lemma}{Lemma}[section]
\newtheorem{assumption}{Assumption}
\theoremstyle{definition}
\newtheorem{remark}{Remark}[section]
\begin{document}

\title[Sub-linear convergence of SPI in Hilbert space]{Sub-linear convergence of a stochastic proximal iteration method in Hilbert space}

\author[M.~Eisenmann]{Monika Eisenmann}
\email{monika.eisenmann@math.lth.se}

\author[T.~Stillfjord]{Tony Stillfjord}
\email{tony.stillfjord@math.lth.se}

\author[M. Williamson]{M\aa{}ns Williamson}
\email{mans.williamson@math.lth.se}

\address{  Centre for Mathematical Sciences\\
  Lund University\\
  P.O.\ Box 118\\
  221 00 Lund, Sweden}

\thanks{
This work was partially supported by the Wallenberg AI, Autono\-mous Systems 
and Software Program (WASP) funded by the Knut and Alice Wallenberg Foundation. 
The computations were enabled by resources provided by the Swedish National 
Infrastructure for Computing (SNIC) at LUNARC partially funded by the Swedish 
Research Council through grant agreement no. 2018–05973.
The authors would like to thank Eskil Hansen for valuable feedback.}

\keywords{stochastic proximal point \and convergence analysis \and convergence rate \and infinite-dimensional \and Hilbert space}
\subjclass[2020]{46N10 \and  65K10 \and 90C15}

\begin{abstract}
We consider a stochastic version of the proximal point algorithm for optimization 
problems posed on a Hilbert space. A typical application of this is supervised 
learning. While the method is not new, it has not been extensively analyzed in this 
form. Indeed, most related results are confined to the finite-dimensional 
setting, where error bounds could depend on the dimension of the space. On the 
other hand, the few existing results in the infinite-dimensional setting only prove 
very weak types of convergence, owing to weak assumptions on the problem. In 
particular, there are no results that show convergence with a rate. In this article, 
we bridge these two worlds by assuming more regularity of the optimization 
problem, which allows us to prove convergence with an (optimal) sub-linear rate also in an 
infinite-dimensional setting.
In particular, we assume that the objective function is the expected value of a family
of convex differentiable functions. While we require that the full objective function
is strongly convex, we do not assume that its constituent parts are so. Further, we
require that the gradient satisfies a weak local Lipschitz continuity property, where
the Lipschitz constant may grow polynomially given certain guarantees on the variance
and higher moments near the minimum.
We illustrate these results by discretizing a concrete 
infinite-dimensional classification problem with varying degrees of accuracy.
\end{abstract}

\maketitle

\section{Introduction}
We consider convex optimization problems of the form
\begin{equation} \label{eq:main_problem}
  w^* = \argmin_{w \in H} F(w),
\end{equation}
where
\begin{equation*}
  F(w) = \E_{\xi} [ f(w, \xi) ].
\end{equation*}
The main applications we have in mind are supervised learning tasks. In such a 
problem, a set of data samples $\{x_j\}_{j=1}^{n}$ with corresponding labels 
$\{y_j\}_{j=1}^{n}$ is given, as well as a classifier $h$ depending on the 
parameters $w$. The goal is to find $w$ such that $h(w,x_j) \approx y_j$ for 
all $j \in \{1,\dots,n\}$. This is done by minimizing
\begin{equation} \label{eq:ML_functional}
  F(w) = \frac{1}{n} \sum_{j = 1}^{n}{\ell(h(w, x_j), y_j)},
\end{equation}
where $\ell$ is a given loss function. We refer to, e.g.,  
Bottou, Curtis \& Nocedal \cite{BottouCurtisNocedal.2018} for an overview. In 
order to reduce the 
computational costs, it has been proved to be useful to split
$F$ into a collection of functions $f$ of the type
\begin{align*}
    f(w,\xi) = \frac{1}{|B_{\xi}|} \sum_{j \in B_{\xi} } {\ell(h(w, x_j), y_j)},
\end{align*}
where $B_{\xi}$ is a random subset of $\{1,\dots,n\}$, referred to as 
a batch. In particular, the case of $|B_{\xi}| = 1$  is interesting for applications,
as it corresponds to a separation of the data into single samples.

A commonly used method for such problems is the stochastic gradient method (SGD), given by the iteration
\begin{equation*}
  w^{k+1} = w^k - \alpha_k \nabla f(w^k, \xi^k),
\end{equation*}
where $\alpha_k >0$ denotes a step size, $\{\xi^k\}_{k \in \N}$ is a family of jointly 
independent random variables and $\nabla$ denotes
the G\^{a}teaux derivative with respect to the first variable. The idea is that in each 
step we choose a random part $f(\cdot, \xi)$ of $F$ and go in the direction of the 
negative gradient of this function. 
SGD corresponds to a stochastic version of the explicit (forward) Euler scheme 
applied to the gradient flow
\begin{equation*}
   \dot{w} = - \nabla F(w). 
\end{equation*}
This differential equation is frequently stiff, which means that the method often suffers 
from stability issues.

The restatement of the problem as a gradient flow suggests that we could avoid 
such stability problems by instead considering a stochastic version of 
implicit (backward) Euler, given by
\begin{equation*}
  w^{k+1} = w^k - \alpha_k \nabla f(w^{k+1}, \xi^k).
\end{equation*}
In the deterministic setting, this method has a long history under the name 
\emph{proximal point method}, because it is equivalent to 
\begin{equation*}
  w^{k+1} = \argmin_{w \in H} \Big\{ \alpha F(w) + \frac{1}{2} \| w - w^k\|^2
  \Big\} = \text{prox}_{\alpha F}(w^k),
\end{equation*}
where 
\begin{equation*}
  \text{prox}_{\alpha F}(w^k) = (I + \alpha \nabla F)^{-1} w^k.
\end{equation*}
The proximal point method has been studied extensively 
in the infinite dimensional but deterministic case, beginning with the work of 
Rockafellar~\cite{Rockafellar.1976}. Several convergence results and connections
to other methods such as the Douglas--Rachford splitting are collected in 
Eckstein \& Bertsekas~\cite{EcksteinBertsekas.1992}.

Following Ryu \& Boyd~\cite{RyuBoyd.2016}, we will refer to the stochastic version 
as \emph{stochastic proximal iteration} (SPI). We note that the computational cost 
of 
one SPI step is in general much higher than for SGD, and indeed often infeasible. 
However, in many special cases a clever reformulation can result in very similar 
costs. If so, then SPI should be preferred over SGD, as it will converge more 
reliably.
We provide such an example in Section~\ref{section:numerical_experiments}.

The main goal of this paper is to prove sub-linear convergence of the type 
\begin{equation*}
  \E\big[ \|w^k - w^* \|^2 \big] \le \frac{C}{k}
\end{equation*}
in an infinite-dimensional setting, i.e.\ where $\{w^k\}_{k \in \N}$ and $w^*$ are elements 
in a Hilbert space $H$. As shown in e.g.~\cite{AgarwalEtal.2012,RaginskyRakhlin.2011},
this is optimal in the sense that we cannot expect a better asymptotic rate even in
the finite-dimensional case.

Most previous convergence results in this setting only provide guarantees for 
convergence, without an explicit error bound. The convergence is usually also in a rather 
weak norm. This is mainly due to weak assumptions on the involved functions 
and operators. 
Overall, little work has been done to consider SPI in an infinite dimensional space. 
A few exceptions are given by Bianchi \cite{Bianchi.2016}, where maximal
monotone operators $\nabla F \colon H \to 2^H$ are considered and weak 
ergodic convergence and norm convergence is proved. In
Rosasco et al. \cite{RosascoVillaVu.2020}, the authors work with an infinite
dimensional setting and an implicit-explicit splitting where $\nabla F$ is 
decomposed in
a regular and an irregular part. The regular part is considered explicitly but with a
stochastic approximation while the irregular part is used in a deterministic 
proximal step. They prove both $\nabla F(w^k) \to \nabla F(w^*)$ and $w^k \to 
w^*$ in $H$ as $k \to \infty$. Without further assumptions, neither of these
approaches yield convergence rates.

In the finite-dimensional case, stronger assumptions are typically 
made, with better convergence guarantees as a result. Nevertheless, for the SPI 
scheme in particular, we are only aware of the unpublished 
manuscript~\cite{RyuBoyd.2016}, which suggests $\nicefrac{1}{k}$ convergence in 
$\R^d$. 
Based on~\cite{RyuBoyd.2016}, the implicit method has also been considered in a few other works:
In Patrascu \& Necoara 
\cite{PatrascuNecoara.2017}, a SPI method with additional constraints on the 
domain was studied. A slightly
more general setting that includes the SPI has been considered in Davis \&
Drusvyatskiy \cite{DavisDrusvyatskiy.2019}. Toulis \& Airoldi and Toulis et al.
studied such an implicit scheme in 
\cite{ToulisAiroldi.2015,ToulisAiroldi.2016,ToulisRennieAiroldi.2014}.

Whenever using an implicit scheme, it is essential to solve the appearing implicit
equation effectively. This can be impeded by large batches for the stochastic
approximation of $F$. On the other hand, a larger batch improves the accuracy of
the approximation of the function. In Toulis, Tran \& Airoldi
\cite{ToulisTranAiroldi.2016,TranToulisAiroldi.2015} and Ryu \& 
Yin \cite{RyuYin.2019}, a compromise was found by
solving several implicit problems on small batches and taking the average of
these results. This corresponds to a sum splitting. Furthermore, implicit-explicit
splittings can be found in Patrascu \& Irofti
\cite{PatrascuIrofti.2020}, Ryu \& Yin \cite{RyuYin.2019}, Salim et al. 
\cite{SalimBianchiHachem.2019}, Bianchi \& Hachem \cite{BianchiHachem.2016} 
and Bertsekas \cite{Bertsekas.2011}.
A few more related schemes have been considered in Asi \&
Duchi \cite{AsiDuchi.2019B,AsiDuchi.2019} and Toulis, Horel \& Airoldi
\cite{ToulisHorelAiroldi.2020}.
More information about the complexity of solving these kinds of implicit 
equations and the
corresponding implementation can be found in Fagan \& Iyengar \cite{FaganIyengar.2020} and
Tran, Toulis \& Airoldi in~\cite{TranToulisAiroldi.2015}.

Our aim is to bridge the gap between the ``strong finite-dimensio\-nal'' 
and ``weak infinite-dimensional'' settings, by extending the approach 
of~\cite{RyuBoyd.2016} to the infinite-dimensional case. We also further extend the results
by allowing for more general Lipschitz conditions on $\nabla f(\cdot,\xi)$, provided that
sufficient guarantees can be made on the integrability near the minimum $w^*$.
These strong convergence results can then be applied to, e.g., the setting where there is an 
original infinite-dimensional optimization problem which is subsequently 
discretized into a series of finite-dimensional problems. Given a reasonable 
discretization, each of those problems will then satisfy the same convergence 
guarantees.
We will follow~\cite{RyuBoyd.2016} closely, because their approach is sound. 
However, several arguments no longer work in the infinite-dimensional case
(such as the unit ball being compact, or a linear operator having a minimal 
eigenvalue) and we fix those. Additionally, we simplify several of the remaining 
arguments, provide many omitted, but critical, details and extend the results to 
less bounded operators. 

A brief outline of the paper is as follows. The main assumptions that we make are 
stated in Section~\ref{section:assumptions}, as well as the main theorem. Then we 
prove a number of preliminary results in Section~\ref{section:preliminaries}, before 
we can tackle the main proof in Section~\ref{section:main_theorem}. In 
Section~\ref{section:numerical_experiments} we describe a numerical experiment 
that illustrates our results, and then we summarize our findings in 
Section~\ref{section:conclusions}.

\section{Assumptions and main theorem}\label{section:assumptions}
Let $(\Omega, \F, \P )$ be a complete probability space and let $\{\xi^k\}_{k \in 
\N}$ be a family of jointly independent random variables on $\Omega$. Each realization of 
$\xi^k$ corresponds to a different batch. 
Let $(H, \inner{\cdot}{\cdot}, \|\cdot \|)$ be a real Hilbert space and $(H^*, 
\inner[H^*]{\cdot}{\cdot}, \| \cdot \|_{H^*} )$ its dual. 
Since $H$ is a Hilbert space, there exists an isometric isomorphism $\iota \colon 
H^* \to H$ such that $\iota^{-1} \colon H \to H^*$ with $\iota^{-1}: u \mapsto 
\inner{u}{\cdot}$. Furthermore, the dual pairing is denoted by $\dual{u'}{u} = u'(u)$ 
for $u' \in H^*$ and $u \in H$. It satisfies
\begin{align*}
  \dual{\iota^{-1} u }{v} = \inner{u}{v}
  \quad
  \text{and}
  \quad 
  \dual{u' }{v} = \inner{\iota u'}{v}, 
  \quad u,v \in H, u' \in H^*.
\end{align*}
We denote the space of linear bounded operators mapping $H$ into $H$ by 
$\L(H)$. For a symmetric operator $S$, we say that it is positive if $\inner{Su}{u} \geq 
0$ for all $u \in H$. It is called strictly positive if $\inner{Su}{u} > 0$ for all $u \in 
H$ such that $u \neq 0$.

For the function $f(\cdot, \xi) \colon H \times \Omega \to (-\infty, \infty]$, we use
$\nabla$, as in $\nabla f(u, \xi)$, to denote differentiation with respect to the 
first variable. When we present an argument that holds almost surely, 
we will frequently omit $\xi$ from the notation and simply write $f(u)$ rather than 
$f(u, \xi)$. Given a random variable $X$ on $\Omega$, we 
denote the expectation with respect to $\P$ by $\E[X]$. We use sub-indices, such 
as in $\E_{\xi}[\cdot]$, to denote expectations with respect to the probability 
distribution of a certain random variable.

For the family of jointly independent random variables $\{\xi^k\}_{k \in \N}$, we are 
interested in the total expectation
\begin{equation*}
  \E_{k}\big[ \|X\|^2 \big] = \E_{\xi^1}\big[ \E_{\xi^2}\big[ \cdots 
  \E_{\xi^{k}} \big[ \|X \|^2 \big] \cdots \big]\big].
\end{equation*}
Since the random variables $\{\xi^k\}_{k \in \N}$ are jointly independent, and 
$w^k$ only depends on $\xi^j$, $j \le k-1$, this expectation coincides with the 
expectation with respect to the joint probability distribution of $\xi^1, \ldots, 
\xi^{k-1}$.
In the rest of the paper, it often occurs that a statement does not involve an expectation 
but contains a random variable. Where it does not cause any confusion, such a statement is 
assumed to hold almost surely even if this is not explicitly stated. 

We consider the stochastic proximal iteration (SPI) scheme given 
by
\begin{align}\label{eq:scheme}
  w^{k+1} = w^k - \alpha_k \iota \nabla f(w^{k+1}, \xi^k) \quad \text{ in } H, 
  \quad
  \quad w^1 = w_1 \quad \text{ in } H,
\end{align}
for minimizing
\begin{equation*}
  F(w) = \E_{\xi} [ f(w, \xi) ], 
\end{equation*}
where $f$ and $F$ fulfill the following assumption.

\begin{assumption}\label{ass:f}
  For a random variable $\xi$ on $\Omega$, let the function $f(\cdot, \xi) \colon \Omega 
  \times H \to (-\infty, \infty]$ be given such that $f(\cdot, \xi)$ is convex, lower 
  semi-continuous and proper 
  almost surely. 
  Additionally, $f(\cdot, \xi)$ fulfills the following conditions:
  \begin{itemize}
  \item The expectation $\E_{\xi}\big[f(\cdot, \xi)\big] =: F(\cdot)$ is lower semi-continuous 
  and proper.

  \item The function $f(\cdot, \xi)$ is G\^{a}teaux differentiable almost surely on a 
  non-empty common domain $\domain{\nabla f} \subseteq H$, i.e. for all for all $v,w \in 
  \domain{\nabla f}$ the inequality $\dual{\iota \nabla f (v, \xi)}{w} = \lim_{h \to 0} \frac{f(v + 
  hw, \xi) - f(v, \xi)}{h}$ is fulfilled almost surely.

  \item There exists $m \in \N$ such that $\big(\E_{\xi}\big[ \| \nabla f(w^*,\xi)\|_{H^*}^{2^m} \big]\big)^{2^{-m}} =: \sigma < \infty$.
      
  \item For every $R > 0$ there exists $\Lr{\xi}{R} \colon \Omega \to \R$ such that
    \begin{equation*}
      \| \nabla f(u, \xi) - \nabla f(v, \xi) \|_{H^*} \le \Lr{\xi}{R} \|u - v\|
    \end{equation*}
    almost surely for all $u, v \in \domain{\nabla f}$ with $\|u \|, \|v\| \leq R$. Furthermore, there 
    exists a polynomial $P \colon \R \to \R$ of degree $2^m -2$ such that $\Lr{\xi}{R}
    \leq P(R)$ almost surely.
    
  \item There exist a random variable $M_{\xi} \colon \Omega \to \L(H)$ such that the 
    image is symmetric and a random variable $\mu_{\xi} \colon \Omega \to [0, \infty)$ 
    such that $\E_{\xi}[\mu_{\xi}] = \mu > 0$ and $\E_{\xi}[\mu_{\xi}^2] = \nu^2 < \infty$. 
    Moreover, 
    \begin{equation*}
      \dual{\nabla f(u, \xi) - \nabla f(v, \xi)}{u - v} 
      \geq \inner{M_{\xi}(u - v)}{u - v}
      \geq \mu_{\xi} \|u - v\|^2
    \end{equation*}
    is fulfilled almost surely for all $u,v \in \domain{\nabla f}$. 
    
  \end{itemize}
\end{assumption}

An immediate result of Assumption~\ref{ass:f}, is that the gradient $\nabla 
f(\cdot, \xi)$ is maximal monotone almost surely, 
see~\cite[Theorem~A]{Rockafellar.1970}. As a consequence, the resolvent (proximal 
operator)
\begin{equation*}
  T_{f, \xi} = (I + \nabla f(\cdot,\xi))^{-1}
\end{equation*}
is well-defined almost surely, see 
Lemma~\ref{lem:ExistenceMeasurable} for more details. Further, each resolvent maps
into $\domain{\nabla f}$, and as a consequence every iterate $w^k \in \domain{\nabla f}$. Finally, we may interchange expectation and differentation so that $\nabla F(w) = \E_{\xi}[\nabla f(\xi, w)]$. In our case, this can be shown via a straightforward argument based on dominated convergence similar to~\cite[Lemma 6]{RyuBoyd.2016}, but we note that it also holds in more general settings~\cite{Papageorgiou.1997,RockafellarWets.1982}.

\begin{remark}
  The idea behind the operators $M_{\xi}$ is that each $f(\cdot, \xi)$ is is allowed to be 
  only convex rather than strongly convex. However, they should be strongly convex in 
  \emph{some} directions, such that $f(\cdot, \xi)$ is strongly convex \emph{in 
  expectation}. 
  By assumption, $F$ is lower semi-continuous, proper and strongly convex, so there 
  is a minimum $w^*$ of \eqref{eq:main_problem} (c.f.~\cite[Proposition~1.4]{Barbu.2010})
  which is unique due to the strong convexity.
\end{remark}

\begin{remark}
  While the properness of $F$ needs to be verified by application-specific means, the 
  lower semi-continuity can be guaranteed on a more general level in different ways.
  If, e.g., it is additionally known that $\E_{\xi} \big[\inf_{u \in H} f(u, \xi) \big] > -\infty$
  then one can employ Fatou's lemma (\cite[Theorem~2.3.6]{Winkert.2018}) as in \cite[Lemma 
  5]{RyuBoyd.2016}, or slightly modify~\cite[Corollary 9.4]{BauschkeCombettes.2017}.  
\end{remark}

\begin{remark}
  We note that from a function analytic point of view, we are dealing with bounded 
  rather than unbounded operators $\nabla F$. However, also operators that are 
  traditionally seen as unbounded fit into the framework, given that the space $H$ 
  is chosen properly. For example, the functional $F(w) = \frac{1}{2}\int{\|\nabla 
  w\|^2}$ 
  corresponding to $\nabla F = - \Delta$, the negative Laplacian, is unbounded on $H = 
  L^2$. But if we instead choose $H = H^1_0$, then $H^* = H^{-1}$ and $\nabla 
  F$ is bounded and Lipschitz continuous. In this case, the splitting of $F(w)$ into 
  $f(w, \xi^k)$ is less obvious than in our main application, but e.g.\ (randomized) 
  domain decomposition as in ~\cite{QuarteroniValli.1999} is a natural idea. In 
  each step, an elliptic problem then has to be solved (to apply $\iota$), but 
  this can often be done very efficiently.
\end{remark}

Our main theorem states that we have sub-linear convergence of the iterates $w^k$ to 
$w^*$ measured in this expectation:

\begin{theorem} \label{theorem:main_theorem}
  Let Assumption~\ref{ass:f} be fulfilled and let $\{\xi^k\}_{k \in \N}$ be a family 
  of jointly independent random variables on $\Omega$. Then the 
  scheme~\eqref{eq:scheme} converges sub-linearly
  if the step sizes fulfill $\alpha_k = \frac{\eta}{k}$ with $\eta > 
  \frac{1}{\mu}$. In particular, the error bound
  \begin{equation*}
    \E_{k-1}\big[ \|w^k - w^* \|^2 \big] \le \frac{C}{k}
  \end{equation*}
  is fulfilled, where $C$ depends on $\|w_1 - w^* \|$, $\mu$, $\nu$, 
  $\sigma$, $\eta$ and $m$.
\end{theorem}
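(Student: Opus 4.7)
The plan is to derive a one-step contraction for the squared distance $\|w^{k+1}-w^*\|^2$ and iterate it. From $w^{k+1}-w^k = -\alpha_k\iota\nabla f(w^{k+1},\xi^k)$, expanding the squared norm and discarding the nonpositive $\alpha_k^2$-term produces
\begin{equation*}
\|w^{k+1}-w^*\|^2 \leq \|w^k-w^*\|^2 - 2\alpha_k\dual{\nabla f(w^{k+1},\xi^k)}{w^{k+1}-w^*}.
\end{equation*}
Applying the two variants of the strong-convexity inequality in Assumption~\ref{ass:f} at the pair $(w^{k+1},w^*)$ and adding yields $\dual{\nabla f(w^{k+1},\xi^k) - \nabla f(w^*,\xi^k)}{w^{k+1}-w^*} \geq \bar\mu\|w^{k+1}-w^*\|^2$, which transforms the previous display into
\begin{equation*}
(1+2\alpha_k\bar\mu)\|w^{k+1}-w^*\|^2 \leq \|w^k-w^*\|^2 - 2\alpha_k\dual{\nabla f(w^*,\xi^k)}{w^{k+1}-w^*}.
\end{equation*}

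Next I would take conditional expectation with respect to $\xi^k$. The delicate point is that $w^{k+1}$ depends on $\xi^k$, so the cross term does not vanish directly. I would instead split $w^{k+1}-w^* = (w^k-w^*) + (w^{k+1}-w^k)$: the $(w^k-w^*)$ part is past-measurable and its contribution vanishes in expectation because $\E_\xi[\nabla f(w^*,\xi)]=\nabla F(w^*)=0$, while $(w^{k+1}-w^k) = -\alpha_k\iota\nabla f(w^{k+1},\xi^k)$ produces an $\alpha_k^2$-remainder that I would bound using Cauchy--Schwarz together with the triangle-Lipschitz decomposition $\|\nabla f(w^{k+1},\xi^k)\|_{H^*} \leq L_R\|w^{k+1}-w^*\| + \|\nabla f(w^*,\xi^k)\|_{H^*}$. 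After using Young's inequality to absorb the quadratic-in-$\|w^{k+1}-w^*\|$ part into the $2\alpha_k\bar\mu$-term on the left and taking full expectation (invoking $\E[\bar\mu]=\mu$ and $\E[\bar\mu^2]=\nu^2$ to handle the randomness of $\bar\mu$), the target is a recursion of the form
\begin{equation*}
\E_k\big[\|w^{k+1}-w^*\|^2\big] \leq \big(1-2\mu\alpha_k + O(\alpha_k^2)\big)\E_{k-1}\big[\|w^k-w^*\|^2\big] + D_k\alpha_k^2.
\end{equation*}
For $\alpha_k=\eta/k$ with $\eta\mu>1$, a standard induction on $k$ then delivers the desired $C/k$ rate.

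The main obstacle is showing that $D_k$ is uniformly bounded in $k$. Because $L_R$ is only polynomially Lipschitz of degree $2^m-2$, controlling $L_R\|w^{k+1}-w^*\|$ in $L^2$ demands finite moments of $\|w^{k+1}-w^*\|$ up to order $2^m$, which is exactly why the assumption is phrased in terms of the $2^m$-th moment of $\|\nabla f(w^*,\xi)\|_{H^*}$. This is also the place where the infinite-dimensional setting is genuinely harder than the finite-dimensional one of Ryu--Boyd, who can invoke compactness of bounded sets to obtain an almost-sure bound on the iterates. The remedy I intend is a nested bootstrap: establish uniform-in-$k$ moment bounds $\E\big[\|w^k-w^*\|^{2^j}\big]\leq C_j$ inductively for $j=1,\dots,m$ by raising the basic one-step inequality to the power $2^{j-1}$ and using the already-established lower-order bound together with the polynomial growth of $L_R$ and the moment assumption on $\|\nabla f(w^*,\xi)\|_{H^*}$. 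The degree $2^m-2$ is tuned precisely so that this bootstrap closes at $j=m$. With these moment bounds in hand, $D_k$ is uniformly bounded and the concluding induction on the $C/k$-recursion completes the proof.
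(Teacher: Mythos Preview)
Your route is different from the paper's and more elementary in spirit. The paper does not expand $\|w^{k+1}-w^*\|^2$ as you do; instead it writes $w^{k+1}=T_{\alpha_k f_k}w^k$, inserts $T_{\alpha_k f_k}w^*$, and obtains the contraction from a resolvent comparison: Lemmas~\ref{lem:same_contraction_factors}--\ref{lem:contraction_factor} show $\E_{\xi^k}\big[\|T_{\alpha_k f_k}w^k - T_{\alpha_k f_k}w^*\|^2\big] \leq (1-2\mu\alpha_k+3\nu^2\alpha_k^2)^{1/2}\|w^k-w^*\|^2$ by comparing $T_f$ to the resolvent $T_{\tilde f}$ of the quadratic model $\tilde f$. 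The point of this machinery is precisely to place the random contraction factor (which depends on $\xi^k$ through $M$) in front of the past-measurable $\|w^k-w^*\|^2$ rather than the $\xi^k$-dependent $\|w^{k+1}-w^*\|^2$. Your moment-bootstrap plan for controlling $D_k$ is essentially the paper's Lemma~\ref{lem:apriori}, and the paper handles the cross terms $I_1,I_2$ with the same local-Lipschitz/H\"older strategy you describe.

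There is, however, a genuine gap at your step ``taking full expectation (invoking $\E[\bar\mu]=\mu$\ldots)''. On your left-hand side $\bar\mu$ multiplies $\|w^{k+1}-w^*\|^2$, and both depend on $\xi^k$; hence $\E_{\xi^k}\big[\bar\mu\,\|w^{k+1}-w^*\|^2\big]\neq \mu\,\E_{\xi^k}\big[\|w^{k+1}-w^*\|^2\big]$ in general, and you cannot extract the factor $1-2\mu\alpha_k$ just by taking expectation. The clean fix is to divide through by $(1+2\alpha_k\bar\mu)$ first, so that the random factor $(1+2\alpha_k\bar\mu)^{-1}\leq 1-2\alpha_k\bar\mu+4\alpha_k^2\bar\mu^2$ acts on $\|w^k-w^*\|^2$, which \emph{is} independent of $\xi^k$; taking expectation then yields the desired $1-2\mu\alpha_k+4\nu^2\alpha_k^2$. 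The price is that the cross term $-2\alpha_k\dual{\nabla f(w^*,\xi^k)}{w^k-w^*}$ is now also multiplied by $(1+2\alpha_k\bar\mu)^{-1}$ and no longer vanishes exactly in expectation; but since $(1+2\alpha_k\bar\mu)^{-1}-1=O(\alpha_k\bar\mu)$, the residual is $O(\alpha_k^2)$, bounded by $4\alpha_k^2\nu\sigma\|w^k-w^*\|$ via Cauchy--Schwarz on $\E[\bar\mu\,\|\nabla f(w^*,\xi)\|_{H^*}]$, and can be folded into $D_k\alpha_k^2$. Likewise, ``absorbing the quadratic-in-$\|w^{k+1}-w^*\|$ part into the $2\alpha_k\bar\mu$-term'' fails pathwise because $\bar\mu$ may vanish; absorb it into the $1$ instead (for $k$ large), or push it to expectation and control it with the moment bootstrap. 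With these corrections your argument goes through; this decoupling of the random strong-convexity constant from the current iterate is exactly what the paper's resolvent-comparison lemmas are engineered to handle.
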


The proof of this theorem is given in Section~\ref{section:main_theorem}. The main idea is to acquire a contraction property of the form
\begin{equation*}
  \E_{k-1}\big[ \|w^k - w^* \|^2 \big] \le C_k \E_{k-2}\big[ \|w^{k-1} - w^* \|^2 \big] + \alpha_k^2 D,
\end{equation*}
where $C_k < 1$ and $D$ are certain constants depending on the data. Inevitably, $C_k 
\to 1$ as $k \to \infty$, but because of the chosen step size sequence this happens 
slowly enough to still guarantee the optimal rate. To reach this point, we first show two 
things: First, an a priori bound of the form $\E_{k-1}\big[ \|w^k - w^* \|^2 \big] \le C$, i.e.\ 
the scheme is always stable. Secondly, that the resolvents $T_{f, \xi}$ are contractive 
with 
\begin{equation*}
  \E_{\xi} \big[\| T_{f, \xi} u - T_{f, \xi} v \|^2\big] \le C_k\|u-v\|^2.
\end{equation*}
Similarly to~\cite{RyuBoyd.2016}, we do the latter by approximating the functions $f(\cdot, \xi)$ by convex quadratic functions $\tilde{f}(\cdot, \xi)$ for which the property is easier to verify, and then establishing a relation between the approximated and the true contraction factors. The series of lemmas in the next section is devoted to this preparatory work.

\section{Preliminaries}\label{section:preliminaries}
First, let us show that the scheme is in fact well-defined, in the sense that every 
iterate is measurable if the random variables $\{\xi^k\}_{k \in \N}$ are.

\begin{lemma} \label{lem:ExistenceMeasurable}
  Let Assumption~\ref{ass:f} be fulfilled. Further, let $\{\xi^k\}_{k \in \N}$ be a 
  family of jointly independent random variables. 
  Then for every $k \in \N$ there exists a unique mapping $w^{k+1} \colon 
  \Omega \to \domain{\nabla f}$ that fulfills 
  \eqref{eq:scheme} and is measurable with respect to the joint probability 
  distribution of $\xi^1, \ldots, \xi^k$.
\end{lemma}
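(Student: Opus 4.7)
The plan is to induct on $k$. The base case $k = 0$ is trivial since $w^1 = w_1$ is deterministic. For the inductive step, assume $w^k$ is measurable with respect to the $\sigma$-algebra generated by $\xi^1, \ldots, \xi^{k-1}$, and seek $w^{k+1}$ as the unique solution of the implicit equation
\begin{equation*}
  w^{k+1}(\omega) = w^k(\omega) - \alpha_k \iota \nabla f\big(w^{k+1}(\omega), \xi^k(\omega)\big).
\end{equation*}

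For pointwise existence and uniqueness, I would invoke the maximal monotonicity of $\nabla f(\cdot, \xi)$ noted right after Assumption~\ref{ass:f}. Since $\iota$ is an isometric isomorphism from $H^*$ to $H$, the operator $\iota \nabla f(\cdot, \xi^k(\omega))$ is maximal monotone on $H$ (almost surely). By Minty's theorem, the resolvent
\begin{equation*}
  J_\omega \colonequals \big(I + \alpha_k \iota \nabla f(\cdot, \xi^k(\omega))\big)^{-1}
\end{equation*}
is a well-defined, single-valued, firmly non-expansive map on all of $H$. Hence the pointwise definition $w^{k+1}(\omega) \colonequals J_\omega(w^k(\omega))$ is forced and unique.

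The main obstacle is measurability. The idea is to approximate $J_\omega$ by explicit iterates that are manifestly measurable and then pass to the limit. The cleanest route is to observe that $w^{k+1}(\omega)$ is the unique minimiser of the strongly convex functional
\begin{equation*}
  g_\omega(v) = \alpha_k f(v, \xi^k(\omega)) + \tfrac{1}{2} \| v - w^k(\omega)\|^2,
\end{equation*}
so it is also the limit of a gradient-descent (or Picard-type) recursion
\begin{equation*}
  v_{n+1}(\omega) = v_n(\omega) - \tau\bigl(v_n(\omega) - w^k(\omega) + \alpha_k \iota \nabla f(v_n(\omega), \xi^k(\omega))\bigr),
  \quad v_0 = w^k,
\end{equation*}
for a suitably small $\tau$. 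Because $\nabla f(\cdot, \xi)$ is locally Lipschitz with polynomial dependence on the radius (Assumption~\ref{ass:f}), one can control the Lipschitz constant via a pathwise a priori bound on $\|v_n\|$ coming from firm non-expansiveness, and thus guarantee convergence $v_n(\omega) \to w^{k+1}(\omega)$ for every $\omega$.

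The remaining piece is that each $v_n$ is measurable. By induction on $n$, this reduces to measurability of $\omega \mapsto \nabla f(v_n(\omega), \xi^k(\omega))$ whenever $v_n$ is measurable. Since $\nabla f(\cdot, \xi)$ is continuous in its first argument (by the local Lipschitz condition) and measurable in $\xi$ (so that $f$ is a Carathéodory function), the composition is a Carathéodory composition and therefore measurable. Joint independence of $\{\xi^j\}$ together with the inductive hypothesis then ensures $v_n$, and hence the pointwise limit $w^{k+1}$, is measurable with respect to the joint law of $\xi^1, \ldots, \xi^k$. The hard part is really the bookkeeping to make the Carathéodory/approximation argument rigorous in the infinite-dimensional setting; everything else is a direct consequence of maximal monotonicity.
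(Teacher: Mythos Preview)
For existence and uniqueness your argument via maximal monotonicity and Minty's theorem matches the paper's, which cites Rockafellar and Barbu to the same effect. The divergence is on measurability: the paper does not approximate at all. Having shown that $h(\omega,u) = w^k(\omega) - (I + \alpha_k\iota\nabla f(\cdot,\xi^k(\omega)))u$ has a unique zero for each $\omega$, it simply invokes a measurable implicit-function lemma (Lemma~2.1.4 of \cite{EisenmannPhD.2019} or Lemma~4.3 of \cite{EisenmannEtAl.2019}) and is done in one line. Your explicit-approximation route is a legitimate alternative in principle and has the virtue of being self-contained rather than relying on an external black box.

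However, the gradient-descent step contains a genuine gap, not just bookkeeping. Firm non-expansiveness is a property of the resolvent $J_\omega$; it bounds the \emph{limit} $w^{k+1}(\omega)$, not the intermediate iterates $v_n(\omega)$. With only \emph{local} Lipschitz continuity of $\nabla f(\cdot,\xi)$, a gradient step with a fixed $\tau$ can push $v_n$ outside any prescribed ball, at which point you lose control of the Lipschitz constant and hence of convergence. To repair this you must first trap the whole trajectory in a ball --- for instance the sublevel set $\{v:g_\omega(v) \le g_\omega(w^k)\}$, which is bounded by the $1$-strong convexity of $g_\omega$ together with the a.s.\ lower-boundedness of $f(\cdot,\xi)$ from Assumption~\ref{ass:f} --- and then choose $\tau = \tau(\omega)$ \emph{measurably}, small relative to $P$ evaluated at the (random) radius of that ball. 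All of this can be made to work, but it is precisely the sort of detour that the paper's one-line citation is designed to avoid.
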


\begin{proof}
  We define the mapping
  \begin{align*}
    h \colon \domain{\nabla f} \times \Omega \to H, \quad
    (u, \omega) \mapsto w^k - (I + \alpha_k  \iota  \nabla f(\cdot, \xi^k(\omega))) 
    u.
  \end{align*}
  For almost all $\omega \in \Omega$, the mapping $f(\cdot, 
  \xi^k(\omega))$ is lower semi-continuous, proper and convex. Thus, by 
  \cite[Theorem~A]{Rockafellar.1970} $\nabla f(\cdot, \xi^k(\omega))$ is maximal 
  monotone. 
  By \cite[Theorem~2.2]{Barbu.2010}, this shows that the operator $\iota^{-1} + 
  \alpha_k \nabla f(\cdot, \xi^k(\omega)) \colon \domain{\nabla f} \to H^*$ is surjective. 
  Note that the two previously cited results are stated for multi-valued operators. As we 
  are in a more regular setting, the sub-differential of $f(\cdot, \xi^k(\omega))$ only 
  consists of a single element at each point. 
  Therefore, it is possible to apply these multi-valued results also in our setting and 
  interpret the appearing operators as single-valued.
  Furthermore, due to the monotonicity of $\nabla f(\cdot, \xi^k(\omega))$ it 
  follows that for $u,v \in \domain{\nabla f}$ 
  \begin{align*}
    \dual{\big(\iota^{-1} + \alpha_k 
      \nabla f(\cdot, \xi^k(\omega))\big) u - \big(\iota^{-1} + \alpha_k 
      \nabla f(\cdot, \xi^k(\omega))\big) v}{u -v }
    \geq \|u-v\|^2
  \end{align*}
  which implies
  \begin{align*}
    \big\|\big(\iota^{-1} + \alpha_k 
      \nabla f(\cdot, \xi^k(\omega))\big) u - \big(\iota^{-1} + \alpha_k 
      \nabla f(\cdot, \xi^k(\omega))\big) v \big\|
    \geq \|u-v\|.
  \end{align*}
  This verifies that $I + \alpha_k \iota \nabla f(\cdot, \xi^k(\omega))$ is injective. As we 
  have proved that the operator is both injective and surjective, it is, in particular, bijective.
  Therefore, there exists a unique element $w^{k+1}(\omega)$ such that
  \begin{align*}
    h(w^{k+1}(\omega), \omega) = w^k - (I + \alpha_k  \iota \nabla f(\cdot, 
    \xi^k(\omega))) w^{k+1}(\omega) = 0.
  \end{align*}
  We can now apply \cite[Lemma~2.1.4]{EisenmannPhD.2019} or \cite[Lemma~    
  4.3]{EisenmannEtAl.2019} and obtain that $\omega \mapsto w^{k+1}(\omega)$ 
  is measurable.
\end{proof}

Proving that the scheme is always stable is relatively straightforward, as shown in the 
next lemma. With some extra effort, we also get stability in stronger norms, i.e.\ we can 
bound not only $\E_{k}\big[\|w^{k+1} - w^*\|^2\big]$ but also higher moments 
$\E_{k}\big[\|w^{k+1} - w^*\|^{2^m}\big]$, $m \in \N$. This will be important since we only 
have the weaker local Lipschitz continuity stated in Assumption~\ref{ass:f} rather than 
global Lipschitz continuity.

\begin{lemma}\label{lem:apriori}
  Let Assumption~\ref{ass:f} be fulfilled, and suppose that 
  $\sum_{k=1}^{\infty}{\alpha_k^2} < \infty$.  Then there exists a
  constant $D \geq 0$ depending only on $\|w_1 - w^*\|$, 
  $\sum_{k=1}^{\infty}{\alpha_k^2}$ and $\sigma$, such that
  \begin{equation*}
    \E_{k}\big[\|w^{k+1} - w^*\|^{2^m}\big] \leq D
  \end{equation*}
  for all $k \in \N$.
\end{lemma}

\begin{proof}
  Within the proof, we abbreviate the function $f(\cdot, \xi^k)$ by $f_k$, $k \in 
  \N$.
  First, we consider first the case $m = 1$.  Recall the identity $\inner{a - b}{a} = 
  \frac{1}{2} \big(\|a\|^2 - \|b\|^2 + \|a-b\|^2\big)$, $a,b \in H$.
  We write the scheme as
  \begin{align*}
    w^{k+1} - w^k + \alpha_k \iota \nabla f_k(w^{k+1}) = 0,
  \end{align*}
  subtract $\alpha_k \iota \nabla f_k(w^{*})$ from both sides, multiply by two and test it 
  with $w^{k+1} - w^*$ to obtain
  \begin{align*}
    &\|w^{k+1} - w^*\|^2 - \|w^k - w^*\|^2 + \|w^{k+1} -w^k\|^2 \\
    &\quad + 2 \alpha_k \inner{\iota \nabla f_k(w^{k+1}) - \iota \nabla 
    f_k(w^*)}{w^{k+1} - w^*} \\
    &= - 2 \alpha_k \inner{\iota \nabla f_k(w^*)}{w^{k+1} - w^*}.
  \end{align*}
  For the right-hand side, we have by Young's inequality that
  \begin{align*}
    &- 2 \alpha_k \inner{ \iota \nabla f_k(w^*)}{w^{k+1} - w^*} \\
    &= - 2 \alpha_k \dual{\nabla f_k(w^*)}{w^{k+1} - w^k} - 2 \alpha_k 
    \dual{\nabla f_k(w^*)}{w^k - w^*}\\
    &\leq 2 \alpha_k \|\nabla f_k(w^*)\|_{H^*} \|w^{k+1} - w^k\| - 2 
    \alpha_k \dual{\nabla f_k(w^*)}{w^k - w^*}\\
    &\leq \alpha_k^2 \|\nabla f_k(w^*)\|_{H^*}^2 + \|w^{k+1} - 
    w^k\|^2 - 2 \alpha_k \dual{\nabla f_k(w^*)}{w^k - w^*}.
  \end{align*}
  Together with the monotonicity condition, it then follows that
  \begin{align}\label{eq:aprioriProof1}
    \begin{split}
      \|w^{k+1} - w^*\|^2 - \|w^k - w^*\|^2
      &\leq \alpha_k^2 \|\nabla f_k(w^*)\|_{H^*}^2- 2 \alpha_k 
      \dual{\nabla f_k(w^*)}{w^k - w^*}.
    \end{split}
  \end{align}
  Since $w^k - w^*$ is independent of $\xi^k$ and $\E_{\xi^k}[\nabla f_k(w^*)] = \nabla F(w^*) = 0$,
  taking the expectation $\E_{\xi^k}$ thus leads to the following bound:
  \begin{align*}
    \E_{\xi^k}\big[\|w^{k+1} - w^*\|^2\big]
    \leq \|w^k - w^*\|^2 + \alpha_k^2 \sigma^2.
  \end{align*}
  Repeating this argument, we obtain that
  \begin{align*}
    \E_{k}\big[\|w^{k+1} - w^*\|^2\big]
    \leq \|w_1 - w^*\|^2 + \sigma^2 \sum_{j = 1}^{k}\alpha_{j}^2.
  \end{align*}
  In order to find the higher moment bound, we recall \eqref{eq:aprioriProof1}.
  We then follow a similar idea as in \cite[Lemma~3.1]{BrzezniakEtAll.2013}, where
  we multiply this inequality with $\|w^{k+1} - w^*\|^2$ and use the 
  identity $(a - b)a = \frac{1}{2} \big(|a|^2 - |b|^2 + |a-b|^2\big)$ for $a,b \in 
  \R$. It then follows that
  \begin{align*}
    &\|w^{k+1} - w^*\|^4 - \|w^k - w^*\|^4 + \big| \|w^{k+1} - w^*\|^2 - \|w^k 
    - w^*\|^2 \big|^2\\
    &\leq \Big( \alpha_k^2 \|\nabla f_k(w^*)\|_{H^*}^2- 2 
    \alpha_k \dual{\nabla f_k(w^*)}{w^k - w^*}\Big) \|w^{k+1} - w^*\|^2\\
    &\leq \Big( \alpha_k^2 \|\nabla f_k(w^*)\|_{H^*}^2- 2 
    \alpha_k \dual{\nabla f_k(w^*)}{w^k - w^*} \Big)\\
    &\quad \times \Big(\|w^k - w^*\|^2 + \alpha_k^2 \|\nabla 
    f_k(w^*)\|_{H^*}^2- 2 \alpha_k 
    \dual{\nabla f_k(w^*)}{w^k - w^*}\Big)\\
    &\leq \alpha_k^2 \|w^k - w^*\|^2 \|\nabla f_k(w^*)\|_{H^*}^2- 2 
    \alpha_k \|w^k - w^*\|^2 \dual{\nabla f_k(w^*)}{w^k - w^*} \\
    &\quad + \alpha_k^4 \|\nabla f_k(w^*)\|_{H^*}^4- 4 
    \alpha_k^3 \|\nabla f_k(w^*)\|_{H^*}^2 \dual{\nabla f_k(w^*)}{w^k - 
    w^*}\\
    &\quad + 4 \alpha_k^2 \big(\dual{\nabla f_k(w^*)}{w^k - w^*}\big)^2.
  \end{align*}
  Applying Young's inequality to the first and fourth term of the previous row then implies 
  that
  \begin{align*}
    &\|w^{k+1} - w^*\|^4 - \|w^k - w^*\|^4\\
    &\leq \frac{\alpha_k^2}{2}\|w^k - w^*\|^4 - 2 
    \alpha_k \|w^k - w^*\|^2 \dual{\nabla f_k(w^*)}{w^k - w^*} \\
    &\quad + \Big(3\alpha_k^4 + \frac{\alpha_k^2}{2}\Big) \|\nabla 
    f_k(w^*)\|_{H^*}^4 
    + 6 \alpha_k^2 \|\nabla f_k(w^*)\|_{H^*}^2 \|w^k - w^*\|^2\\
    &\leq \frac{\alpha_k^2}{2}\|w^k - w^*\|^4 - 2 
    \alpha_k \|w^k - w^*\|^2 \dual{\nabla f_k(w^*)}{w^k - w^*} \\
    &\quad + \Big(3\alpha_k^4 + \frac{\alpha_k^2}{2}\Big) \|\nabla 
    f_k(w^*)\|_{H^*}^4 
    + 3 \alpha_k^2 \|\nabla f_k(w^*)\|_{H^*}^4 
    + 3 \alpha_k^2 \|w^k - w^*\|^4\\
    &\leq \frac{7\alpha_k^2}{2}\|w^k - w^*\|^4 - 2 
    \alpha_k \|w^k - w^*\|^2 \dual{\nabla f_k(w^*)}{w^k - w^*} \\
    &\quad + \Big(3\alpha_k^4 + \frac{7\alpha_k^2}{2}\Big) \|\nabla 
    f_k(w^*)\|_{H^*}^4.
  \end{align*}
  Summing up from $j=1$ to $k$ and taking the expectation $\E_{k}$, yields
  \begin{align*}
    &\E_{k} \big[\|w^{k+1} - w^*\|^4\big] \\
    &\leq \|w_1 - w^*\|^4 + \sum_{j = 1}^{k} \frac{7\alpha_j^2}{2} \E_{j-1} 
    \big[\|w^j - w^*\|^4 \big]
    + \sigma^4 \sum_{j=1}^{k} \Big(3\alpha_j^4 + \frac{7\alpha_j^2}{2}\Big).
  \end{align*}
  We then apply the discrete Gr\"{o}nwall inequality for sums (see, e.g., 
  \cite{Clark.1987}) which shows that
  \begin{align*}
    \E_k \big[\|w^{k+1} - w^*\|^4\big] 
    \leq \Big(\|w_1 - w^*\|^4 
    + \sigma^4 \sum_{j=1}^{k} \Big(3\alpha_j^4 + \frac{7\alpha_j^2}{2}\Big)\Big)
    \exptextB{\frac{7}{2}\sum_{j = 1}^{k} \alpha_j^2}.
  \end{align*}
  For the next higher bound $\E_k \big[\|w^{k+1} - w^*\|^8\big]$, we recall that
  \begin{align*}
    &\|w^{k+1} - w^*\|^4 - \|w^k - w^*\|^4\\
    &\leq \frac{7\alpha_k^2}{2}\|w^k - w^*\|^4 - 2 
    \alpha_k \|w^k - w^*\|^2 \dual{\nabla f_k(w^*)}{w^k - w^*} \\
    &\quad + \Big(3\alpha_k^4 + \frac{7\alpha_k^2}{2}\Big) \|\nabla 
    f_k(w^*)\|_{H^*}^4,
  \end{align*}
  which we can multiply with $\|w^{k+1} - w^*\|^4$ in order to follow the same strategy as 
  before.
  Following this approach, we find bounds for $\E_k \big[\|w^{k+1} - w^*\|^{2^m}\big]$ 
  recursively for all $m \in \N$.
\end{proof}

\begin{remark}\label{rem:apriori}
  In particular, Lemma~\ref{lem:apriori} implies that there exists a constant $D$ 
  depending on $\|w_1 - w^*\|$, $\sum_{k=1}^{\infty}{\alpha_k^2}$ and $\sigma$ such that 
  \begin{equation*}
  \E_{k}\big[\|w^{k+1} - w^*\|^{p}\big] \leq D
  \end{equation*}
for all $p \leq 2^m$ and $k \in \N$.
\end{remark}

For the further  analysis, we now introduce the function $\tilde{f}(\cdot,\xi) 
\colon H \times \Omega \to (-\infty,\infty]$ given by
\begin{align}\label{eq:defTildeF}
  \tilde{f}(u,\xi)=f(u_0,\xi) + \dual{ \nabla f(u_0,\xi)}{u - u_0}+ \frac{1}{2} 
  \inner{M_{\xi}(u - u_0)}{u - u_0},
\end{align}
where $u_0 \in \domain{\nabla f}$ is a fixed parameter.
This mapping is a convex approximation of $f$. Furthermore, we define the 
function $\tilde{r}(\cdot,\xi) \colon H \times \Omega \to (-\infty,\infty]$ given by
\begin{align}\label{eq:defTildeR}
  \tilde{r}(u,\xi) = f(u,\xi) - \tilde{f}(u,\xi).
\end{align} 
Their gradients $\nabla \tilde{f}(\cdot,\xi) \colon H \times \Omega \to H^*$ 
and $\nabla \tilde{r}(\cdot,\xi) \colon \domain{\nabla f} \times \Omega \to H^*$ can be 
stated as
\begin{align*}
  \nabla \tilde{f}(u,\xi) &= \nabla f(u_0,\xi) + \inner{M_{\xi}(u - u_0)}{\cdot}, \quad u \in H,\\
  \nabla \tilde{r}(u,\xi) &= \nabla f(u,\xi) - \nabla f(u_0,\xi) - \inner{M_{\xi}(u - 
  u_0)}{\cdot}, \quad u \in \domain{\nabla f}
\end{align*}
 almost surely.

\begin{lemma}\label{lem:r}
  The function $\tilde{r}(\cdot, \xi)$ defined in \eqref{eq:defTildeR} is convex almost surely, 
  i.e., it fulfills $\tilde{r}(u,\xi) \geq \tilde{r}(v,\xi) + \dual{\nabla \tilde{r}(v,\xi)}{ u - v}$ 
  for all $u,v \in \domain{\nabla f}$ almost surely. As a consequence, the gradient $\nabla 
  \tilde{r}(\cdot,\xi)$ is monotone almost surely.
\end{lemma}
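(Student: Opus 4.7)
The plan is to reduce the claimed convexity of $\tilde{r}$ directly to the strong-convexity-in-expectation bound that is the third bullet of Assumption~\ref{ass:f}. The key observation is that $\tilde{f}(u,\xi)$ is the sum of an affine function of $u$ and a quadratic form $\tfrac{1}{2}(M(u-u_0), u-u_0)$ in which $M$ is symmetric. Hence subtracting $\tilde{f}$ from $f$ amounts to subtracting an ``$M$-quadratic'' piece, which is exactly what the assumption on $M$ provides slack for.

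Concretely, I would fix $u,v \in H$ and expand
\begin{equation*}
\tilde{r}(v,\xi) + \dual{\nabla\tilde r(v,\xi)}{u-v}
\end{equation*}
using the definitions \eqref{eq:defTildeF}--\eqref{eq:defTildeR} and the stated formula for $\nabla\tilde r(v,\xi)$. The terms involving $f(u_0,\xi)$ and $\nabla f(u_0,\xi)$ cancel or combine cleanly (the identity $\dual{\nabla f(u_0,\xi)}{v-u_0} + \dual{\nabla f(u_0,\xi)}{u-v} = \dual{\nabla f(u_0,\xi)}{u-u_0}$ is what makes this work), and I am left with
\begin{equation*}
f(v,\xi) + \dual{\nabla f(v,\xi)}{u-v} - \tfrac{1}{2}\inner{M(v-u_0)}{v-u_0} - \inner{M(v-u_0)}{u-v}.
\end{equation*}
Comparing with $\tilde r(u,\xi) = f(u,\xi) - f(u_0,\xi) - \dual{\nabla f(u_0,\xi)}{u-u_0} - \tfrac{1}{2}\inner{M(u-u_0)}{u-u_0}$, the convexity inequality reduces to
\begin{equation*}
f(u,\xi) - f(v,\xi) - \dual{\nabla f(v,\xi)}{u-v} \;\geq\; \tfrac{1}{2}\inner{M(u-u_0)}{u-u_0} - \tfrac{1}{2}\inner{M(v-u_0)}{v-u_0} - \inner{M(v-u_0)}{u-v}.
\end{equation*}

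At this point I would invoke symmetry of $M$ and expand: writing $a = u - u_0$, $b = v - u_0$ (so $u-v = a-b$) one verifies by direct computation that the right-hand side equals $\tfrac{1}{2}\inner{M(u-v)}{u-v}$. Substituting this in, the desired inequality is precisely the strong convexity bound from Assumption~\ref{ass:f}, and so holds almost surely. Thus $\tilde r(\cdot,\xi)$ is convex almost surely. Monotonicity of $\nabla\tilde r(\cdot,\xi)$ then follows in the standard way: the convexity inequality written for the pair $(u,v)$ added to the one written for $(v,u)$ immediately gives $\dual{\nabla\tilde r(u,\xi) - \nabla\tilde r(v,\xi)}{u-v}\geq 0$ for all $u,v \in H$ almost surely.

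The ``hard'' part here is not conceptually difficult; it is mainly bookkeeping to verify the algebraic identity relating the three $M$-quadratic terms, which crucially uses symmetry of $M$. Everything else is purely mechanical substitution and relies on assumptions that are already available.
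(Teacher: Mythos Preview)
Your proposal is correct and follows essentially the same approach as the paper: both reduce the convexity of $\tilde r$ to the $M$-convexity assumption on $f$ via the algebraic identity that collapses the three $M$-quadratic terms to $\tfrac12\inner{M(u-v)}{u-v}$. The only difference is organizational: the paper packages your computation as the exact second-order Taylor expansion $\tilde f(u)=\tilde f(v)+\dual{\nabla\tilde f(v)}{u-v}+\tfrac12\inner{M(u-v)}{u-v}$ (which holds because $\tilde f$ is quadratic), and then subtracts this from the $M$-convexity inequality for $f$ to obtain the result in one line, whereas you expand everything by hand.
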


\begin{proof}
  In the following proof, let us omit $\xi$ for simplicity and let $u,v \in \domain{\nabla f}$ 
  be given.
  Due to the monotonicity property of $\nabla f$ stated in Assumption~\ref{ass:f}, it follows that
  \begin{align*}
    f(u) \geq f(v) + \dual{\nabla f(v)}{ u - v} + \frac{1}{2}\inner{M(u - 
    v)}{u - v}.
  \end{align*}
  For the function $\tilde{f}$ we can write
  \begin{align*}
    \tilde{f}(u) &= f(u_0) + \dual{\nabla f(u_0)}{u - u_0}+ \frac{1}{2} 
    \inner{M(u -
      u_0)}{u - u_0},\\
    \nabla \tilde{f}(u) &= \nabla f(u_0) + \inner{M(u -u_0)}{\cdot} 
    \quad \text{and} \quad
    \nabla^2 \tilde{f}(u) = M.
  \end{align*}
  All further derivatives are zero. Thus, we can use a Taylor expansion around $v$ to write
  \begin{align*}
    \tilde{f}(u) 
    &= \tilde{f}(v) + \dual{\nabla \tilde{f}(v)}{ u - v} + \frac{1}{2}\inner{M(u - 
    v)}{u - v}.
  \end{align*}
  It then follows that
  \begin{align*}
    \tilde{r}(u) 
    &\geq f(v) + \dual{\nabla f(v)}{ u - v} + \frac{1}{2}\inner{M(u - v)}{u - v}\\
    &\quad - \big(\tilde{f}(v) + \dual{\nabla \tilde{f}(v)}{ u - v} + 
    \frac{1}{2}\inner{M(u -
    v)}{u - v}\big)\\
    &= \tilde{r}(v) + \dual{\nabla \tilde{r}(v)}{ u - v}.
  \end{align*}
  By \cite[Proposition 25.10]{ZeidlerIIB.1990}, it follows that $\nabla \tilde{r}$ is monotone.
\end{proof}

The following lemma demonstrates that the resolvents $T_{\tilde{f}, \xi}$ and certain
perturbations of them are well-defined. Furthermore, we will provide a more explicit 
formula for such resolvents.

\begin{lemma}\label{lem:welldef_resolvents}
  Let Assumption~\ref{ass:f} be fulfilled and let $\tilde{f}(\cdot, \xi)$ be defined as in 
  \eqref{eq:defTildeF}. Then the operator 
  \begin{align*}
  T_{\tilde{f}, \xi} = (I + \iota \nabla \tilde{f}(\cdot,\xi))^{-1} \colon H \times \Omega 
  \to H
  \end{align*} 
  is well-defined.
  If a function $r(\cdot, \xi) \colon H \times \Omega \to (-\infty, \infty]$ is G\^{a}teaux 
  differentiable with the common domain $\domain{\nabla r} = \domain{\nabla f}$, lower semi-continuous, convex and proper almost surely, then
  \begin{align*}
    T_{\tilde{f}+ r, \xi} = (I + \iota \nabla \tilde{f}(\cdot,\xi) + \iota \nabla 
    r(\cdot,\xi))^{-1} \colon H \times \Omega \to \domain{\nabla f}
  \end{align*}
  is well-defined.
  
  If there exist $Q_{\xi} \colon \domain{\nabla f} \times \Omega \to H^*$ and $z_{\xi} \colon \Omega \to 
  H^*$ such that $\nabla r (u,\xi) = Q_{\xi} u + z_{\xi}$ then the resolvent can be 
  represented by
  \begin{align*}
    T_{\tilde{f} + r, \xi} u 
    = (I + M_{\xi} + \iota Q_{\xi})^{-1} \big(u - \iota \nabla f(u_0,\xi) + M_{\xi}u_0 - \iota 
    z_{\xi}\big).
  \end{align*}
\end{lemma}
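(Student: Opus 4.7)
The plan is to run the maximal monotonicity argument from Lemma~\ref{lem:ExistenceMeasurable} twice — once for $\tilde{f}$ and once for $\tilde{f}+r$ — and then to derive the closed-form resolvent by substituting the affine form of $\nabla r$ into the defining equation of the resolvent.

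To see that $T_{\tilde{f}}$ is well-defined, I would first verify that $\tilde{f}(\cdot,\xi)$ is lower semi-continuous, convex, proper, and Gâteaux differentiable almost surely. These follow at once from its quadratic structure and the positivity $\inner{Mu}{u} \ge \bar{\mu}\|u\|^2 \ge 0$, while properness is inherited from finiteness of $f(u_0,\xi)$ at the anchor point. Then \cite[Theorem~A]{Rockafellar.1970} yields maximal monotonicity of $\nabla\tilde{f}(\cdot,\xi)$, and combining \cite[Theorem~2.2]{Barbu.2010} with the coercivity estimate $\dual{\iota^{-1}(u-v)}{u-v} = \|u-v\|^2$ gives bijectivity of $I + \iota\nabla\tilde{f}(\cdot,\xi)$, exactly as in the proof of Lemma~\ref{lem:ExistenceMeasurable}. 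For $T_{\tilde{f}+r}$ the identical template applies: $\tilde{f}+r$ inherits convexity, lower semi-continuity, properness and Gâteaux differentiability from its two summands, and maximal monotonicity of $\nabla\tilde{f} + \nabla r$ follows since both operators are single-valued and everywhere defined on $H$, which removes the usual constraint-qualification obstruction for sums.

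For the explicit formula, I would set $w := T_{\tilde{f}+r}u$ and unpack the defining equation. Using $\iota\nabla\tilde{f}(w,\xi) = \iota\nabla f(u_0,\xi) + M(w-u_0)$ — where $\iota$ applied to the functional $\inner{M(w-u_0)}{\cdot}$ returns $M(w-u_0)$ itself — together with $\iota\nabla r(w,\xi) = \iota Q w + \iota z$, the equation $u = w + \iota\nabla\tilde{f}(w,\xi) + \iota\nabla r(w,\xi)$ rearranges to
\[
(I + M + \iota Q)\,w \;=\; u - \iota\nabla f(u_0,\xi) + M u_0 - \iota z.
\]
Since $M$ is positive and symmetric and the monotonicity of $\nabla r$ (guaranteed by convexity of $r$) gives $\inner{\iota Q v}{v} \ge 0$ for all $v \in H$, the operator $I + M + \iota Q$ is bounded and strongly monotone on $H$, hence bijective by Lax--Milgram; inverting it yields the claimed representation.

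The main obstacle I anticipate is the slightly delicate step of justifying maximal monotonicity of the sum $\nabla\tilde{f} + \nabla r$, since maximal monotonicity is not preserved under addition in general; one needs to invoke the full-domain version of the sum theorem so that surjectivity of $\iota^{-1} + \nabla\tilde{f} + \nabla r$ survives. Everything else — the algebraic rearrangement, the inversion of $I + M + \iota Q$, and the measurability — is bookkeeping that parallels Lemma~\ref{lem:ExistenceMeasurable}.
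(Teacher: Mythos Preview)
Your proposal is correct and follows essentially the same route as the paper: invoke \cite[Theorem~A]{Rockafellar.1970} and \cite[Theorem~2.2]{Barbu.2010} as in Lemma~\ref{lem:ExistenceMeasurable} for well-definedness, then substitute the affine form of $\nabla r$ into the defining equation and rearrange. The obstacle you anticipate is a non-issue --- applying Rockafellar directly to the convex sum $\tilde{f}+r$ (which you already do in your second paragraph) bypasses any sum-rule concern entirely, and this is precisely how the paper proceeds.
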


\begin{proof}
  For simplicity, let us omit $\xi$ again.
  In order to prove that $T_{\tilde{f}}$ and $T_{\tilde{f} + r}$ are well-defined, we 
  can apply \cite[Theorem~A]{Rockafellar.1970} and 
  \cite[Theorem~2.2]{Barbu.2010} analogously to the argumentation in the proof of 
  Lemma~\ref{lem:ExistenceMeasurable}.
  
  Assuming that $\nabla r (u) = Q u + z$, we find an explicit representation for 
  $T_{\tilde{f} + r}$. To this end, for $v \in H$, consider
  \begin{align*}
    (I + \iota \nabla \tilde{f} + \iota \nabla r)^{-1} v = T_{\tilde{f} + r} v =: u
    \in \domain{\nabla f}.
  \end{align*}
  Then it follows that
  \begin{align*}
    v = (I + \iota \nabla \tilde{f} + \iota \nabla r) u
    = (I + M + \iota Q) u + \iota \nabla f(u_0) - Mu_0 + \iota z.
  \end{align*}
  Rearranging the terms, yields
  \begin{align*}
    T_{\tilde{f} + r} v 
    &= (I + M + \iota Q)^{-1} \big(v - \iota \nabla f(u_0) + Mu_0 - \iota z\big).
  \end{align*}
\end{proof}

Next, we will show that the contraction factors of $T_{f, \xi}$ and $T_{\tilde{f}, \xi}$ are 
related. For this, we need the following basic identities and some stronger inequalities that 
hold for symmetric positive operators on $H$.

\begin{lemma}\label{lem:basic_identities}
  Let Assumption~\ref{ass:f} be satisfied and let $\tilde{f}(\cdot, \xi)$ and $\tilde{r}(\cdot, 
  \xi)$ be given 
  as in \eqref{eq:defTildeF} and \eqref{eq:defTildeR}, respectively. Then the 
  identities 
  \begin{align*}
    \iota \nabla f  (T_{f, \xi}, \xi) = I - T_{f,\xi}
    \quad 
    \text{and}
    \quad
    \iota \nabla \tilde{f} (T_{f,\xi},\xi) + T_{f,\xi} - I
    = - \iota \nabla \tilde{r} (T_{f,\xi},\xi)
  \end{align*}
  are fulfilled almost surely.
\end{lemma}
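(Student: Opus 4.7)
The plan is to prove the two identities as direct consequences of the definition of the resolvent $T_f = (I + \iota \nabla f(\cdot, \xi))^{-1}$ and the splitting $f = \tilde{f} + \tilde{r}$.

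For the first identity, I would start from the defining relation $(I + \iota \nabla f(\cdot,\xi)) T_f = I$, which holds almost surely once Lemma~\ref{lem:welldef_resolvents} (or rather the analogous argument in the proof of Lemma~\ref{lem:ExistenceMeasurable}) guarantees that $T_f$ is a well-defined right inverse of $I + \iota \nabla f(\cdot,\xi)$. Writing out this composition gives
\begin{equation*}
  T_f + \iota \nabla f(T_f, \xi) = I \quad \text{almost surely},
\end{equation*}
and rearranging yields $\iota \nabla f(T_f,\xi) = I - T_f$. This is just a rewriting of the fixed-point characterization $T_f u = u - \iota \nabla f(T_f u, \xi)$, so no real calculation is required.

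For the second identity, I would invoke the definition $\tilde{r}(\cdot,\xi) = f(\cdot,\xi) - \tilde{f}(\cdot,\xi)$ from~\eqref{eq:defTildeR}. Since $\tilde{f}$ is G\^{a}teaux differentiable almost surely (it is a sum of an affine functional and a quadratic form involving $M$), so is $\tilde{r}$, and linearity of the gradient gives $\nabla \tilde{r}(u,\xi) = \nabla f(u,\xi) - \nabla \tilde{f}(u,\xi)$. Evaluating at $u = T_f$, applying $\iota$ (which is linear), and substituting $\iota \nabla f(T_f,\xi) = I - T_f$ from the first identity gives
\begin{equation*}
  \iota \nabla \tilde{r}(T_f, \xi) = (I - T_f) - \iota \nabla \tilde{f}(T_f, \xi),
\end{equation*}
which rearranges to the claimed identity $\iota \nabla \tilde{f}(T_f, \xi) + T_f - I = -\iota \nabla \tilde{r}(T_f, \xi)$.

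Neither step presents a real obstacle; the only point that deserves care is the justification that $T_f$ is indeed a well-defined right inverse of $I + \iota \nabla f(\cdot,\xi)$ almost surely, which has already been established in the proof of Lemma~\ref{lem:ExistenceMeasurable} via \cite[Theorem~A]{Rockafellar.1970} and \cite[Theorem~2.2]{Barbu.2010}. Everything else is symbol manipulation using the linearity of $\iota$ and $\nabla$ on the affine-plus-quadratic part.
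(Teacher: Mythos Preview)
Your proof is correct and follows essentially the same route as the paper: the first identity is obtained by expanding $(I + \iota \nabla f(\cdot,\xi)) T_f = I$ and rearranging, and the second by substituting the first into the decomposition $\nabla f = \nabla \tilde{f} + \nabla \tilde{r}$ evaluated at $T_f$. The additional remarks on well-definedness of $T_f$ are fine but not strictly needed here, as this is already established earlier in the paper.
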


\begin{proof}
  By the definition of $T_{f,\xi}$, we have that
  \begin{align*}
    T_{f,\xi} + \iota \nabla f  (T_{f,\xi},\xi) = (I + \iota \nabla f(\cdot,\xi) ) T_{f,\xi} = I,
  \end{align*}
  from which the first claim follows immediately.
  The second identity then follows from
  \begin{align*}
    \iota \nabla \tilde{f} (T_{f,\xi},\xi) + T_{f,\xi} - I
    = \iota \nabla \tilde{f} (T_{f,\xi},\xi) - \iota \nabla f (T_{f,\xi},\xi)
    = - \iota \nabla \tilde{r} (T_{f,\xi},\xi) .
  \end{align*}
\end{proof}

As a consequence of Lemma~\ref{lem:basic_identities} we have the following
basic inequalities:

\begin{lemma}\label{lem:basic_inequalities}
  Let Assumption~\ref{ass:f} be satisfied. It then follows that
  \begin{equation*}
    \|T_{f,\xi} u - u\| \le \|\nabla f(u,\xi)\|_{H^*}
  \end{equation*}
  almost surely for every $u \in \domain{\nabla f}$. Additionally, if for 
  $R >0$ the bound $\|u\| + \|\nabla f(u,\xi)\| \le R$ holds true almost surely, 
  then the second-order estimate
  \begin{equation*}
    \|\iota^{-1}(T_{f,\xi} u - u) + \nabla f(u,\xi)\|_{H^*} \le \Lr{\xi}{R}\|\nabla 
    f(u,\xi)\|_{H^*}
  \end{equation*}
  is fulfilled almost surely.
\end{lemma}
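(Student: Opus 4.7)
The plan is to reduce both estimates to the defining identity $\iota \nabla f(T_f u,\xi) = u - T_f u$ from Lemma~\ref{lem:basic_identities} and then exploit monotonicity (for the first bound) and the local Lipschitz property of $\nabla f(\cdot,\xi)$ (for the second).

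For the first inequality, I would start from the identity $T_f u - u = -\iota \nabla f(T_f u,\xi)$ and compute
\begin{equation*}
  \|T_f u - u\|^2 = \inner{-\iota \nabla f(T_f u,\xi)}{T_f u - u}
  = \dual{\nabla f(T_f u,\xi)}{u - T_f u},
\end{equation*}
using the relation $\inner{\iota u'}{v} = \dual{u'}{v}$ between the Riesz map and the dual pairing. Monotonicity of $\nabla f(\cdot,\xi)$ (which holds almost surely by convexity, via Assumption~\ref{ass:f}) gives
\begin{equation*}
  \dual{\nabla f(T_f u,\xi) - \nabla f(u,\xi)}{T_f u - u} \ge 0,
\end{equation*}
so $\dual{\nabla f(T_f u,\xi)}{u - T_f u} \le \dual{\nabla f(u,\xi)}{u - T_f u}$, and the dual pairing bound $\dual{u'}{v} \le \|u'\|_{H^*}\|v\|$ finishes the proof after dividing through by $\|T_f u - u\|$ (the case $T_f u = u$ being trivial).

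For the second, Lemma~\ref{lem:basic_identities} again rewrites the left-hand side as
\begin{equation*}
  \iota^{-1}(T_f u - u) + \nabla f(u,\xi) = -\nabla f(T_f u,\xi) + \nabla f(u,\xi),
\end{equation*}
so the quantity to control is just $\|\nabla f(u,\xi) - \nabla f(T_f u,\xi)\|_{H^*}$. The key preparatory step is to check that $T_f u$ lies in the ball of radius $R$ where the Lipschitz constant $L_R$ is valid: combining the first inequality with the hypothesis yields
\begin{equation*}
  \|T_f u\| \le \|u\| + \|T_f u - u\| \le \|u\| + \|\nabla f(u,\xi)\|_{H^*} \le R,
\end{equation*}
so the Lipschitz bound from Assumption~\ref{ass:f} applies. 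Then
\begin{equation*}
  \|\nabla f(u,\xi) - \nabla f(T_f u,\xi)\|_{H^*} \le L_R \|u - T_f u\| \le L_R \|\nabla f(u,\xi)\|_{H^*},
\end{equation*}
with the last step coming from the first inequality, giving the claim.

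The arguments are essentially bookkeeping once the correct identity is in hand, so I do not anticipate a genuine obstacle. The only point that requires care is tracking the action of $\iota$ and $\iota^{-1}$ so that norms are taken in the correct space (in particular, writing $\dual{\cdot}{\cdot}$ rather than $\inner{\cdot}{\cdot}$ as soon as one side lives in $H^*$), and verifying that the hypothesis of the local Lipschitz estimate is satisfied by both $u$ and $T_f u$ before invoking it.
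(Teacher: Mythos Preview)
Your proposal is correct and follows essentially the same approach as the paper: both arguments hinge on the identity $\iota\nabla f(T_f u,\xi) = u - T_f u$ from Lemma~\ref{lem:basic_identities}, combined with monotonicity and Cauchy--Schwarz for the first estimate, and with the local Lipschitz bound (after verifying $\|T_f u\|\le R$ via the first estimate) for the second. The only cosmetic difference is that the paper organizes the first inequality by bounding $\dual{-\nabla f(u)}{T_f u - u}$ from below and above separately, whereas you compute $\|T_f u - u\|^2$ directly and then invoke monotonicity, but the substance is identical.
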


\begin{proof}
  In order to shorten the notation, we omit the $\xi$ in the following proof and let $u$ be in 
  $\domain{\nabla f}$.
  For the first inequality, we note that since $\nabla f$ is monotone, we have
  \begin{equation*}
    \dual{\nabla f (T_f u) - \nabla f(u)}{T_f u - u} \ge 0.
  \end{equation*}
  Thus, by the first identity in Lemma~\ref{lem:basic_identities}, it follows that
  \begin{align*}
    \dual{-\nabla f(u)}{T_f u - u} &=
    \dual{\nabla f(T_fu) - \nabla f(u)}{T_f u - u} - \dual{\nabla f (T_f 
      )}{T_f u - u}\\
    &\ge \dual{\iota^{-1}(T_f u - u)}{T_f u - u}\\
    &= \inner{T_f u - u }{T_f u - u}
    = \|T_f u - u \|^2.
  \end{align*}
  But by the Cauchy-Schwarz inequality, we also have
  \begin{equation*}
    \dual{-\nabla f(u)}{T_f u - u} \le \|\nabla f(u)\|_{H^*} \|T_f u - u \|,
  \end{equation*}
  which in combination with the previous inequality proves the first claim.
  
  The second inequality follows from the first part of this lemma. Because
  \begin{align*}
    \|T_f u\| 
    \leq \|T_f u - u\| + \|u\| 
    \leq \|\nabla f(u) \|_{H^*} + \|u\|,
  \end{align*}
  both $u$ and $T_f u$ are in a ball of radius $R$. Thus, we obtain 
  \begin{align*}
    \|\iota^{-1} (T_f u - u) + \nabla f(u)\|_{H^*} 
    &= \|\nabla f(u) - \nabla f(T_f u)\|_{H^*}  \\
    &\le \Lr{}{R} \|u - T_f u\| 
    \le \Lr{}{R} \|\nabla f(u)\|_{H^*}.
  \end{align*}
\end{proof}

\begin{lemma}\label{lem:positive_operator_inequalities}
  Let $Q, S \in \L(H)$ be symmetric operators. Then the following holds:
  \begin{itemize}
    \item If $Q$ is invertible and $S$ and $Q^{-1}$ are strictly positive. Then $(Q 
    + S)^{-1} < Q^{-1}$. If $S$ is only positive, then $(Q + S)^{-1} \leq Q^{-1}$.
    \item If $Q$ is a positive and contractive operator, i.e. $\|Qu\| \le \|u\|$ for all 
    $u \in H$, then it follows that $\|Qu\|^2 \leq \inner{Qu}{u}$ for all $u \in H$.
    \item If $Q$ is a strongly positive invertible operator, such that there exists 
    $\beta > 0$ with $\inner{Q u}{u} \geq \beta \|u\|^2$ for all $u \in H$, then 
    $\|Q u \| \geq \beta \|u\|$ for all $u \in H$ and 
    $\|Q^{-1}\|_{\L(H)} \leq \frac{1}{\beta}$.
  \end{itemize}
\end{lemma}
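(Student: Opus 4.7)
I would handle the three bullets separately, in the order third, second, first, since the first relies on the third.

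For the third bullet, Cauchy--Schwarz gives $\beta \|u\|^2 \le \inner{Qu}{u} \le \|Qu\|\,\|u\|$, hence $\|Qu\| \ge \beta\|u\|$ for every $u \in H$; substituting $u = Q^{-1}v$ then yields $\|Q^{-1}v\| \le \beta^{-1}\|v\|$, i.e.\ $\|Q^{-1}\|_{\L(H)} \le 1/\beta$.

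For the second bullet, I would apply Cauchy--Schwarz to the positive semi-definite bilinear form $[x,y] := \inner{Qx}{y}$, which is well defined because $Q$ is symmetric and positive. The inequality $[u, Qu]^2 \le [u,u]\,[Qu,Qu]$ becomes $\inner{Qu}{Qu}^2 \le \inner{Qu}{u}\,\inner{Q(Qu)}{Qu}$, and contractivity together with a second Cauchy--Schwarz bounds $\inner{Q(Qu)}{Qu} \le \|Q(Qu)\|\,\|Qu\| \le \|Qu\|^2$. Dividing (the case $Qu = 0$ being trivial) then gives the claim without invoking the spectral theorem.

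For the first bullet, my plan is to compare the two inverses through a variational characterization. The key preliminary observation is that since $Q^{-1} \in \L(H)$ and $Q$ is symmetric positive, the spectrum of $Q$ is contained in some $[c,d]$ with $c > 0$, so $Q$ is in fact strongly positive with constant $c$; adding the positive operator $S$ preserves this, and the third bullet then guarantees $(Q+S)^{-1} \in \L(H)$. For any strongly positive $A \in \L(H)$ and $v \in H$, the functional $u \mapsto \inner{Au}{u} - 2\inner{v}{u}$ is coercive and strictly convex with unique minimizer $u^* = A^{-1}v$ and minimum value $-\inner{A^{-1}v}{v}$. Since $\inner{(Q+S)u}{u} \ge \inner{Qu}{u}$ for every $u$, the $(Q+S)$-minimum is no smaller than the $Q$-minimum, and rearranging yields $\inner{(Q+S)^{-1}v}{v} \le \inner{Q^{-1}v}{v}$. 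For strict inequality when $S$ is strictly positive, I would evaluate at the nonzero vector $u^* = (Q+S)^{-1}v$ (for $v \ne 0$) and use $\inner{Su^*}{u^*} > 0$ to produce a strict gap. The main obstacle I anticipate is exactly this preliminary upgrade from strict to strong positivity that secures invertibility of $Q+S$, since strict positivity alone is insufficient in infinite dimensions.
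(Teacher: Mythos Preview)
Your proposal is correct. The third bullet is handled exactly as in the paper. For the second bullet, the paper instead invokes the existence of a positive square root $Q^{1/2}$ and argues $\|Qu\|^2 = \|Q^{1/2}Q^{1/2}u\|^2 \le \|Q^{1/2}u\|^2 = \inner{Qu}{u}$, using that $\|Q^{1/2}\|^2 = \|Q\| \le 1$; your Cauchy--Schwarz argument on the semi-definite form $[x,y] = \inner{Qx}{y}$ is more elementary, avoiding the square-root theorem entirely. For the first bullet, the paper takes a rather different route: it derives the Woodbury-type identity $(Q+S)^{-1} = Q^{-1} - Q^{-1}S(I+Q^{-1}S)^{-1}Q^{-1}$ and then shows, via the substitution $z = (I+Q^{-1}S)^{-1}u$, that $S(I+Q^{-1}S)^{-1}$ is (strictly) positive. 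Your variational approach---minimizing $u \mapsto \inner{Au}{u} - 2\inner{v}{u}$ to identify $-\inner{A^{-1}v}{v}$---is cleaner and sidesteps the need to justify that $(I+Q^{-1}S)^{-1} \in \L(H)$. Regarding the obstacle you flag: it is genuine but resolvable. Since $Q$ is self-adjoint, $\inf\sigma(Q) = \inf_{\|u\|=1}\inner{Qu}{u} \ge 0$ by strict positivity (which follows from strict positivity of $Q^{-1}$), while boundedness of $Q^{-1}$ forces $0 \notin \sigma(Q)$; closedness of the spectrum then gives $\inf\sigma(Q) > 0$, i.e.\ the strong positivity you need.
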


\begin{proof}
We start by expressing $(Q + S)^{-1}$ in terms of $Q^{-1}$ and $S$, similar to 
the Sherman-Morrison-Woodbury formula for matrices~\cite{Hager.1989}. First 
observe that the operator $(I + Q^{-1}S)^{-1} \in \L(H)$ by 
e.g.~\cite[Lemma 2A.1]{LasieckaTriggiani.2000}.
Then, since
\begin{align*}
  &\Big(Q^{-1} - Q^{-1}S\big(I + Q^{-1}S \big)^{-1}Q^{-1}\Big) (Q+S) \\
&\quad = I + Q^{-1}S - Q^{-1}S\big(I + Q^{-1}S \big)^{-1}\big(I + Q^{-1}S \big) 
= I
\end{align*}
and
\begin{align*}
  &(Q+S)\Big(Q^{-1} - Q^{-1}S\big(I + Q^{-1}S \big)^{-1}Q^{-1}\Big)  \\
&\quad = I + SQ^{-1} - S\big(I + Q^{-1}S \big)\big(I + Q^{-1}S \big)^{-1}Q^{-1} 
= I,
\end{align*}
we find that
\begin{align*}
  (Q + S)^{-1} = Q^{-1} - Q^{-1}S\big(I + Q^{-1}S \big)^{-1}Q^{-1}.
\end{align*}
Since $Q^{-1}$ is symmetric, we see that $(Q + S)^{-1} < Q^{-1}$ if and only if 
$S\big(I + Q^{-1}S \big)^{-1}$ is strictly positive. But this is true, as we see 
from 
the change of variables $z = (I + Q^{-1}S)^{-1} u$. Because then
\begin{align*}
  \innerB{S\big(I + Q^{-1}S \big)^{-1} u}{u} &= \innerb{Sz}{z + Q^{-1}Sz} 
  = \innerb{Sz}{z} + \innerb{Q^{-1}Sz}{Sz} > 0
\end{align*}
for any $u \in H$, $u \neq 0$, since $S$ and $Q^{-1}$ are strictly positive. If $S$ 
is only positive, it follows analogously that $\innerb{S\big(I + Q^{-1}S 
\big)^{-1} u}{u} \geq 0$.

  In order to prove the second statement, we use the fact that there exists a 
  unique symmetric and positive square root $Q^{\nicefrac{1}{2}} \in \L(H)$ such 
  that $Q = 
  Q^{\nicefrac{1}{2}}Q^{\nicefrac{1}{2}}$. Since $\|Q\| = \sup_{x \in H} \inner{Q 
  x}{x}  = \sup_{x \in H} \inner{Q^{\frac{1}{2}} x}{Q^{\frac{1}{2}} x} 
  = \|Q^{\nicefrac{1}{2}}\|^2$, also $Q^{\nicefrac{1}{2}}$ is contractive. Thus 
  \begin{align*}
    \|Q u\|^2 = \|Q^{\nicefrac{1}{2}}Q^{\nicefrac{1}{2}}u\|^2 \le  
    \|Q^{\nicefrac{1}{2}}u\|^2 = 
    \inner{Q^{\nicefrac{1}{2}}u}{Q^{\nicefrac{1}{2}}u} = \inner{Qu}{u}.
  \end{align*}
  
  Now, we prove the third statement. First we notice that $\inner{Qu}{u} \geq \beta 
  \|u\|^2$ and $\inner{Qu}{u} \leq \|Qu\| \|u\|$ imply that $\|Qu\| \geq \beta 
  \|u\|$ for all $u \in H$. Substituting $v = Q^{-1}u$, then shows $\|v\| \geq 
  \beta \|Q^{-1}v \|$, which proves the final claim. 
\end{proof}

\begin{lemma} \label{lem:same_contraction_factors}
  Let Assumption~\ref{ass:f} be fulfilled and let $\tilde{f}(\cdot, \xi)$ be given as in 
  \eqref{eq:defTildeF}. Then
  \begin{align*}
    \E_{\xi} \Bigg[\frac{\| T_{f, \xi} u - T_{f, \xi} v \|^2}{\|u-v\|^2} \Bigg]
    \leq \Bigg( \E_{\xi} \Bigg[\frac{\| T_{\tilde{f}, \xi} u - T_{\tilde{f}, \xi} v \|^2}{\|u-v\|^2} 
    \Bigg]\Bigg)^{\nicefrac{1}{2}}
  \end{align*}
  holds for every $u,v \in H$.
\end{lemma}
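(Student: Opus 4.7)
The plan is to prove the stronger pointwise estimate
\begin{equation*}
\|T_f u - T_f v\|^2 \le \|u - v\| \cdot \|T_{\tilde{f}} u - T_{\tilde{f}} v\|
\end{equation*}
almost surely, divide by $\|u-v\|^2$, and then pass to expectations with Jensen's inequality applied to the concave square root. First, I would set $a = T_f u$ and $b = T_f v$. The first identity of Lemma~\ref{lem:basic_identities} gives $u - v = (a-b) + \iota(\nabla f(a,\xi) - \nabla f(b,\xi))$. Adding the $M$-convexity bound in Assumption~\ref{ass:f} applied at $(a,b)$ to its counterpart at $(b,a)$ yields
\begin{equation*}
\dual{\nabla f(a,\xi) - \nabla f(b,\xi)}{a-b} \ge \inner{M(a-b)}{a-b}.
\end{equation*}
Taking the $H$-inner product of $u - v$ with $a - b$ then produces the core estimate
\begin{equation*}
\inner{(I+M)(a-b)}{a-b} \le \inner{u-v}{a-b}.
\end{equation*}

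Next, since $M$ is a symmetric positive element of $\mathcal{L}(H)$ almost surely, so is $I + M$, which is moreover bounded below by $I$; hence it has a self-adjoint positive square root $(I+M)^{\nicefrac{1}{2}}$ by functional calculus (as in the proof of Lemma~\ref{lem:positive_operator_inequalities}), commuting with $(I+M)^{-\nicefrac{1}{2}}$. Writing the right-hand side of the previous display as $\inner{(I+M)^{-\nicefrac{1}{2}}(u-v)}{(I+M)^{\nicefrac{1}{2}}(a-b)}$ and applying Cauchy--Schwarz, I can divide through by $\|(I+M)^{\nicefrac{1}{2}}(a-b)\|$ and square to obtain
\begin{equation*}
\inner{(I+M)(a-b)}{a-b} \le \inner{(I+M)^{-1}(u-v)}{u-v}.
\end{equation*}
Combining this with the trivial bound $\|a-b\|^2 \le \inner{(I+M)(a-b)}{a-b}$ (from $M \ge 0$) and a final Cauchy--Schwarz on the right-hand side gives the desired pointwise inequality.

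It then remains to identify $(I+M)^{-1}(u-v)$ with $T_{\tilde{f}} u - T_{\tilde{f}} v$, which is immediate from the definition of $\tilde{f}$ in \eqref{eq:defTildeF} (the $u_0$-dependent constants cancel upon taking the difference, cf.\ Lemma~\ref{lem:welldef_resolvents} with $r \equiv 0$), then to divide by $\|u-v\|^2$, take $\E_\xi$ on both sides, and invoke Jensen's inequality for the concave function $\sqrt{\cdot}$ to convert $\E_\xi\bigl[\|T_{\tilde{f}}u - T_{\tilde{f}}v\|/\|u-v\|\bigr]$ into the claimed square-root bound. The main obstacle I anticipate is the middle step: converting the $(I+M)$-weighted inner-product estimate into a genuine norm bound on $\|a-b\|$, which is exactly where the symmetric square root of $I+M$ needs to be brought in rather than relying on cruder direct estimates.
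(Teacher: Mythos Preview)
Your argument is correct and establishes the same pointwise inequality
\[
\frac{\|T_f u - T_f v\|^2}{\|u-v\|^2} \le \frac{\|T_{\tilde f}u - T_{\tilde f}v\|}{\|u-v\|}
\]
that the paper obtains, after which both proofs finish identically via Jensen. The route, however, is genuinely different. The paper builds an auxiliary quadratic function $\tilde r_\varepsilon$ whose resolvent $T_{\tilde f+\tilde r_\varepsilon}$ interpolates $T_f$ at $u$ and (in the limit $\varepsilon\to 0$) at $v$; it then compares the explicit linear resolvents $(I+M)^{-1}$ and $(I+M+Q_\varepsilon)^{-1}$ using the operator-monotonicity and contractivity facts of Lemma~\ref{lem:positive_operator_inequalities}, and passes to the limit. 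Your approach bypasses this construction entirely: you extract the single inequality $\inner{(I+M)(a-b)}{a-b}\le\inner{u-v}{a-b}$ directly from $M$-convexity, and then the symmetric square root $(I+M)^{1/2}$ converts it in one Cauchy--Schwarz step to $\inner{(I+M)(a-b)}{a-b}\le\inner{(I+M)^{-1}(u-v)}{u-v}$. This is shorter and avoids both the rank-one perturbation $\tilde r_\varepsilon$ and the $\varepsilon\to 0$ limit; the only tool needed beyond Cauchy--Schwarz is the existence of $(I+M)^{\pm 1/2}$, which is standard for bounded symmetric positive operators and is already invoked in the paper's proof of Lemma~\ref{lem:positive_operator_inequalities}. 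The paper's route has the minor advantage of showing more structure (namely that $T_f$ is sandwiched by a family of \emph{linear} resolvents), but for the stated lemma your argument is strictly more economical.
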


\begin{proof}
  For better readability, we once again omit $\xi$ where there is no risk of confusion.
  For $u,v \in \domain{\nabla f}$ and $\varepsilon >0$, we approximate the function $\tilde{r}(\cdot, \xi)$ 
  defined in \eqref{eq:defTildeR} by 
  \begin{align*}
    \tilde{r}_{\varepsilon}(\cdot, \xi) \colon H \times \Omega \to (-\infty,\infty], \quad
    \tilde{r}_{\varepsilon} (z,\xi) = \dual{\nabla \tilde{r}(T_f u,\xi)}{ z} +
    \frac{\big(\dual{v_{\varepsilon}}{z - T_f u}\big)^2}{2 a_{\varepsilon}},
  \end{align*}
  where
  \begin{align*}
    v_{\varepsilon} = - \nabla \tilde{r}(T_f u) + \nabla \tilde{r}(T_f v) + \varepsilon 
    \iota^{-1}
    (T_f v - T_f u) \in H
    \quad \text{and} \quad
    a_{\varepsilon}  = \dual{v_{\varepsilon}}{T_f v - T_f u}.
  \end{align*}
  As we can write
  \begin{align*}
  a_{\varepsilon}
  &= \dual{- \nabla \tilde{r}(T_f u) + \nabla \tilde{r}(T_f v) + \varepsilon 
    \iota^{-1} (T_f 
    v - 
    T_f u)}{T_f v-T_f u}\\
  &= \dual{\nabla \tilde{r}(T_f u) - \nabla \tilde{r}(T_f v) }{T_f u-T_f v} + 
  \varepsilon \inner{ T_f v - T_f u}{T_f v-T_f u}\\
  &\geq \varepsilon \| T_f v - T_f u \|^2 > 0,
  \end{align*}
  $\tilde{r}_{\varepsilon}$ is well-defined.
  The derivative is given by $\nabla \tilde{r}_{\varepsilon}(\cdot,\xi) \colon H
  \times \Omega \to H^*$,
  \begin{align*}
    \nabla \tilde{r}_{\varepsilon} (z) 
    = \nabla \tilde{r}(T_f u) 
    + \frac{\dual{v_{\varepsilon}}{z - T_f u}}{a_{\varepsilon}} v_{\varepsilon}
    = \frac{\dual{v_{\varepsilon}}{z}}{a_{\varepsilon}} v_{\varepsilon} + \nabla 
    \tilde{r}(T_f 
    u) 
    - \frac{\dual{v_{\varepsilon}}{T_f u}}{a_{\varepsilon}} v_{\varepsilon}.
  \end{align*}
  This function $\nabla \tilde{r}_{\varepsilon}$ is an interpolation 
  between the points
  \begin{align*}
    \nabla \tilde{r}_{\varepsilon} (T_f u) &= \nabla \tilde{r}(T_f u) \quad \text{and}\\\\
    \nabla \tilde{r}_{\varepsilon} (T_f v) &= \nabla \tilde{r}(T_f u)
    + \frac{\dual{v_{\varepsilon}}{T_f v - T_f u}}{a_{\varepsilon}} v_{\varepsilon}\\
    &= \nabla \tilde{r}(T_f u)
    + \frac{\dual{v_{\varepsilon}}{T_f v - T_f u}}{\dual{v_{\varepsilon}}{T_f v - T_f
        u}} v_{\varepsilon}\\
    &= \nabla \tilde{r}(T_f u)
    + \big(- \nabla \tilde{r}(T_f u) + \nabla \tilde{r}(T_f v) + \varepsilon \iota^{-1} 
    (T_f v 
    - T_f 
    u)\big)\\
    &= \nabla \tilde{r}(T_f v) + \varepsilon \iota^{-1} (T_f v - T_f u).
  \end{align*}
  Furthermore, since $T_{\tilde{f} + \tilde{r}_{\varepsilon}} = (I + \iota \nabla 
  \tilde{f} + \iota \nabla \tilde{r}_{\varepsilon})^{-1}$, it 
  follows that
  \begin{align*}
    (I + \iota \nabla \tilde{f} + \iota \nabla 
    \tilde{r}_{\varepsilon} ) T_f u
    &= T_f u + \iota\nabla \tilde{f}(T_f u) + \iota\nabla \tilde{r} (T_f u)\\
    &= T_f u + \iota\nabla f(T_f u) 
    = (I + \iota\nabla f) T_f u
    = u
  \end{align*}
  and therefore
  \begin{align*}
    T_f u
    = (I + \iota \nabla \tilde{f} + \iota \nabla 
    \tilde{r}_{\varepsilon} )^{-1} u
    = T_{\tilde{f} + \tilde{r}_{\varepsilon}} u.
  \end{align*}
  Applying Lemma~\ref{lem:basic_identities}, we find that
  \begin{align*}
      &(I + \iota \nabla \tilde{f} + \iota \nabla 
      \tilde{r}_{\varepsilon} ) T_f v\\
      &= T_f v + \iota \nabla \tilde{f}(T_f v) + \iota \nabla \tilde{r} (T_f v) + 
      \varepsilon (T_f v - T_f u)\\
      &= T_f v + \iota \nabla f(T_f v) + \varepsilon (T_f v - T_f u)= v + 
      \varepsilon 
      (T_f v - T_f u).
  \end{align*}
  This shows that
  \begin{align}\label{eq:proofContraction}
    T_f v
    = (I + \iota \nabla \tilde{f} + \iota \nabla 
    \tilde{r}_{\varepsilon} )^{-1}(v + 
    \varepsilon (T_f v - T_f u))
    = T_{\tilde{f} + \tilde{r}_{\varepsilon}} (v + \varepsilon (T_f v - T_f u)).
  \end{align}
  Using the explicit representation of $T_{\tilde{f} + \tilde{r}_{\varepsilon}}$ from 
  Lemma~\ref{lem:welldef_resolvents}, it follows that
  \begin{align*}
    T_{\tilde{f} + \tilde{r}_{\varepsilon}} z 
    &= \Big(I + M + \iota \Big(\frac{\dual{v_{\varepsilon}}{\cdot}}{a_{\varepsilon}} 
    v_{\varepsilon}\Big) \Big)^{-1} \Big(z - \iota \nabla f(u_0)\\
    &\hspace{4cm}  + Mu_0 - \iota 
    \Big(\nabla \tilde{r}(T_f u) - \frac{\dual{v_{\varepsilon}}{T_f u}}{a_{\varepsilon}} 
    v_{\varepsilon}\Big) \Big).
  \end{align*}
  Therefore, we have
  \begin{align*}
    &\| T_{\tilde{f} + \tilde{r}_{\varepsilon}} v - T_{\tilde{f} + 
    \tilde{r}_{\varepsilon}} (v + \varepsilon (T_f v - T_f u))\|\\
    &\leq \Big\| \Big(I + M + \iota 
    \Big(\frac{\dual{v_{\varepsilon}}{\cdot}}{a_{\varepsilon}} 
    v_{\varepsilon}\Big) \Big)^{-1} \Big\|_{\L(H)}
    \| v - v  - \varepsilon (T_f v - T_f u) \|\\
    &\leq \varepsilon \| T_f v - T_f u \| \to 0
    \quad \text{ as } \varepsilon \to 0,
  \end{align*}
  since
  \begin{align*}
    \innerB{\Big(I + M + \iota 
    \Big(\frac{\dual{v_{\varepsilon}}{\cdot}}{a_{\varepsilon}} 
      v_{\varepsilon}\Big)\Big) u}{u} \geq \|u\|^2
  \end{align*}
means that  we can apply Lemma~\ref{lem:positive_operator_inequalities}.
  Thus, this shows that $T_f u= T_{f + \tilde{r}_{\varepsilon}} u$ and $T_f v = 
  \lim_{\varepsilon \to 0} T_{\tilde{f} + \tilde{r}_{\varepsilon}} v$.
  Further, we can state an explicit representation for $T_{\tilde{f}}$ using 
  Lemma~\ref{lem:welldef_resolvents} given by
  \begin{align*}
    T_{\tilde{f}} z = (I + \iota \nabla \tilde{f} )^{-1} z = (I + M)^{-1} 
    \big(z - \iota \nabla f(u_0) + Mu_0\big).
  \end{align*}
  For $n = \frac{u-v}{\|u-v\|}$ with $\|n\| = 1$, we obtain using
  Lemma~\ref{lem:positive_operator_inequalities}
  \begin{align*}
    \frac{\| T_{\tilde{f}} u - T_{\tilde{f}} v \|}{\|u - v \|}
    &= \| (I + M)^{-1}n \|\\
    &\geq \inner{(I + M)^{-1} n}{n} \\
    &\geq \innerB{\Big(I + M + \iota 
    \Big(\frac{\dual{v_{\varepsilon}}{\cdot}}{a_{\varepsilon}}
      v_{\varepsilon}\Big) \Big)^{-1} n}{n} \\
    &\geq \Big \|\Big(I + M + \iota 
    \Big(\frac{\dual{v_{\varepsilon}}{\cdot}}{a_{\varepsilon}}
    v_{\varepsilon}\Big) \Big)^{-1} n \Big\|^2 \\
    &= \frac{\| T_{\tilde{f} + \tilde{r}_{\varepsilon} } u - T_{\tilde{f} +
    \tilde{r}_{\varepsilon}} v \|^2}{\|u - v \|^2}
    \to \frac{\| T_f u - T_f v \|^2}{\|u - v \|^2} \quad \text{ as } \varepsilon
    \to 0.
  \end{align*}
  Finally, as $\E_{\xi} \Big[ \frac{\| T_{\tilde{f}} u - T_{\tilde{f}} v \|}{\|u - v \|} 
  \Big]$ is 
  finite, we can apply the dominated convergence theorem to obtain that
  \begin{align*}
    \E_{\xi} \Big[ \frac{\| T_f u - T_f v \|^2}{\|u - v \|^2} \Big]
    \leq \E_{\xi} \Big[ \frac{\| T_{\tilde{f}} u - T_{\tilde{f}} v \|}{\|u - v \|} \Big]
    \leq \Big(\E_{\xi}\Big[ \frac{\| T_{\tilde{f}} u - T_{\tilde{f}} v \|^2}{\|u - v \|^2}
    \Big]\Big)^{\frac{1}{2}}.
  \end{align*}
\end{proof}

After having established a connection between the contraction properties of $T_{f,\xi}$ and 
$T_{\tilde{f},\xi}$, the next step is to provide a concrete result for the contraction factor of 
$T_{\tilde{f},\xi}$. Applying Lemma~\ref{lem:welldef_resolvents}, 
we can express this resolvent in terms of $M_{\xi}$, which is easier to handle due to its
linearity.

\begin{lemma}\label{lem:contraction_factor}
  Let Assumption~\ref{ass:f} be satisfied and let $\tilde{f}(\cdot,\xi)$ be given as in 
  \eqref{eq:defTildeF}. Then for $u, v \in H$ and $\alpha > 0$,
  \begin{equation*}
    \E_{\xi}\big[ \| T_{\alpha\tilde{f}, \xi} u - T_{\alpha\tilde{f}, \xi} v \|^2\big]
    <  \E_{\xi}\big[ \| (I + \alpha M_{\xi})^{-1} \|_{\L(H)}^2 \big] \|u- v\|^2
  \end{equation*}
  is fulfilled.
  Furthermore, it follows that
  \begin{align*}
    \E_{\xi}\big[ \| (I + \alpha M_{\xi})^{-1} \|_{\L(H)}^2 \big] <  
    1 - 2\mu\alpha + 3\nu^2 \alpha^2.
  \end{align*}
\end{lemma}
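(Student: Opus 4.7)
The plan is to reduce both inequalities to one explicit computation plus one elementary polynomial estimate. For the first inequality, observe that $\alpha\tilde{f}$ has precisely the same structure as $\tilde{f}$ in \eqref{eq:defTildeF}, but with $M$ replaced by $\alpha M$ and $\nabla f(u_0,\xi)$ by $\alpha\nabla f(u_0,\xi)$. Hence the derivation in the proof of Lemma~\ref{lem:welldef_resolvents} (specialized to $r \equiv 0$) yields
\begin{equation*}
  T_{\alpha\tilde{f}} z = (I + \alpha M)^{-1}\bigl(z - \alpha\iota\nabla f(u_0,\xi) + \alpha M u_0\bigr)
\end{equation*}
almost surely. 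Subtracting the representations for $z=u$ and $z=v$ cancels all the $u_0$-dependent terms, leaving $T_{\alpha\tilde{f}}u - T_{\alpha\tilde{f}}v = (I+\alpha M)^{-1}(u-v)$. Taking squared norms, applying the operator-norm bound, and then $\E_{\xi}$ immediately produces the first claim.

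For the second inequality, I would first reduce the operator-norm to a scalar bound. Since $M$ is symmetric with $\inner{Mu}{u}\geq\bar{\mu}\|u\|^2$, the operator $I+\alpha M$ is symmetric and strongly positive with constant $1+\alpha\bar{\mu}$, so the third item of Lemma~\ref{lem:positive_operator_inequalities} gives $\|(I+\alpha M)^{-1}\|_{\L(H)} \leq (1+\alpha\bar{\mu})^{-1}$ almost surely. Squaring,
\begin{equation*}
  \|(I+\alpha M)^{-1}\|_{\L(H)}^2 \leq \frac{1}{(1+\alpha\bar{\mu})^2}.
\end{equation*}

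The remaining step is the elementary pointwise estimate $(1+x)^{-2}\leq 1-2x+3x^2$ for $x\geq 0$, which follows by expanding
\begin{equation*}
  (1-2x+3x^2)(1+x)^2 = 1 + 4x^3 + 3x^4 \geq 1.
\end{equation*}
Applied at $x=\alpha\bar{\mu}\geq 0$ and combined with $\E[\bar{\mu}]=\mu$, $\E[\bar{\mu}^2]=\nu^2$ from Assumption~\ref{ass:f}, taking the expectation delivers the bound $1-2\mu\alpha+3\nu^2\alpha^2$.

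There is no real obstacle here beyond bookkeeping: the only point requiring any care is recognizing that $\alpha\tilde{f}$ fits the template of Lemma~\ref{lem:welldef_resolvents} with rescaled data, so that one obtains the clean identity $T_{\alpha\tilde{f}}u - T_{\alpha\tilde{f}}v = (I+\alpha M)^{-1}(u-v)$ without any residual affine term. After that, the two inequalities fall out from Lemma~\ref{lem:positive_operator_inequalities} and the scalar inequality above.
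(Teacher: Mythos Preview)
Your proposal is correct and follows essentially the same route as the paper: derive $T_{\alpha\tilde f}u - T_{\alpha\tilde f}v = (I+\alpha M)^{-1}(u-v)$ from the explicit resolvent formula in Lemma~\ref{lem:welldef_resolvents}, bound $\|(I+\alpha M)^{-1}\|_{\L(H)}\le(1+\alpha\bar\mu)^{-1}$ via Lemma~\ref{lem:positive_operator_inequalities}, and then use the scalar inequality $(1+x)^{-2}\le 1-2x+3x^2$ before taking expectations. The only cosmetic difference is that the paper justifies the last scalar bound by invoking a second-order Taylor expansion, whereas you verify it directly via $(1-2x+3x^2)(1+x)^2=1+4x^3+3x^4\ge1$, which is arguably cleaner.
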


\begin{proof}
  Due to the explicit representation of $T_{\alpha\tilde{f}, \xi}$ stated in 
  Lemma~\ref{lem:welldef_resolvents}, we find that
  \begin{equation*}
    T_{\alpha\tilde{f}, \xi} u - T_{\alpha\tilde{f}, \xi} v = (I + \alpha M_{\xi})^{-1}(u-v)
  \end{equation*}
  for $u,v \in H$. As $u-v$ does not depend on $\Omega$, it follows that
  \begin{align*}
    \E_{\xi}\big[ \| (I + \alpha M_{\xi})^{-1} (u - v) \|^2\big]
    \leq \E_{\xi}\big[ \| (I + \alpha M_{\xi})^{-1} \|_{\L(H)}^2 \big] \| u - v \|^2.
  \end{align*}
  Thus, we have reduced the problem to a question about ``how contractive'' the 
  resolvent of $M_{\xi}$ is in expectation.
  We note that for any $u \in H$, we have 
  \begin{align*}
    \inner{(I + \alpha M_{\xi})u}{u}
    \geq (1 + \mu_{\xi} \alpha) \|u\|^2.
  \end{align*}
  Due to Lemma~\ref{lem:positive_operator_inequalities} it follows 
  that
  \begin{equation*}
    \|(I + \alpha M_{\xi})^{-1}\|_{\L(H)}^2 \le (1 + \mu_{\xi} \alpha)^{-2} .
  \end{equation*}
  The right-hand-side bound is a $C^2(0,\infty)$-function with respect to $\alpha$ (in 
  fact, it is even in $C^{\infty}(0,\infty)$). By a second-order expansion in a Taylor series 
  we can therefore conclude that 
  \begin{equation*}
    \|(I + \alpha M_{\xi})^{-1}\|_{\L(H)}^2 
    \le 1 - 2 \mu_{\xi} \alpha + 3 \mu_{\xi}^2 \alpha^2.
  \end{equation*}
  Combining these results, we obtain
  \begin{align*}
    \E_{\xi} \big[\| (I + \alpha M_{\xi})^{-1} \|_{\L(H)}^2\big]
    \leq \E_{\xi} \big[ 1 - 2 \mu_{\xi} \alpha + 3 \mu_{\xi}^2 \alpha^2 
    \big]
    =  1 - 2\mu\alpha + 3\nu^2 \alpha^2.
  \end{align*}
\end{proof}

Finally, the proof of the main theorem relies on iterating the step-wise bounds arising from 
the contraction properties of the resolvents which we just established. This leads to 
certain products of the contraction factors. The following algebraic inequalities show that 
these are bounded in the desired way.

\begin{lemma} \label{lem:algebraic_inequalities}
  Let $C_1, C_2>0$, $p>0$ and $r \ge 0$ satisfy $C_1p > r$ and $4C_2 \ge C_1^2$. Then the following inequalities 
  are satisfied:
\begin{enumerate}[label=(\roman*)]
  \item $
    \prod_{j= 1}^k \big(1-\frac{C_1}{j} + \frac{C_2}{j^2}\big)^{p} \le 
    \exptextB{\frac{C_2 p \pi^2}{6}} (k+1)^{-C_1p} $,
  
  \item $
    \sum_{j=1}^k \frac{1}{ j^{1+r}} \prod_{i = j+1}^k \big(1-\frac{C_1}{i} +  
    \frac{C_2}{i^2}\big)^{p} 
  \le 2^{C_1p} \exptextB{\frac{C_2 p \pi^2}{6}} \frac{1}{C_1 p-r} (k+1)^{-r}.$

\end{enumerate}

\end{lemma}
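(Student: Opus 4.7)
The plan is to reduce both inequalities to elementary sum comparisons by taking logarithms and using $\log(1+x)\le x$ for $x>-1$. The condition $4C_2\ge C_1^2$ ensures that the quadratic $C_2 x^2 - C_1 x + 1$ (with $x = 1/j$) has non-positive discriminant, so $1 - C_1/j + C_2/j^2 \ge 0$ for every integer $j\ge 1$; the logarithm argument is valid whenever this factor is strictly positive, and if it vanishes for some $j$ the product is zero and both claimed bounds are trivial.

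For (i), I would write
$$\log\prod_{j=1}^k\Bigl(1-\frac{C_1}{j}+\frac{C_2}{j^2}\Bigr)^p = p\sum_{j=1}^k\log\Bigl(1-\frac{C_1}{j}+\frac{C_2}{j^2}\Bigr) \le -C_1 p\sum_{j=1}^k\frac{1}{j} + C_2 p\sum_{j=1}^k\frac{1}{j^2},$$
and then apply the two standard bounds $\sum_{j=1}^k 1/j \ge \log(k+1)$ (from $1/j \ge \int_j^{j+1}\mathrm{d}t/t$) and $\sum_{j=1}^k 1/j^2 \le \pi^2/6$. Exponentiating yields the claimed $\exp(C_2 p\pi^2/6)(k+1)^{-C_1 p}$.

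For (ii), the same argument applied to the tail product starting at $j$, together with $\sum_{i=j}^k 1/i \ge \log((k+1)/j)$, gives
$$\prod_{i=j}^k\Bigl(1-\frac{C_1}{i}+\frac{C_2}{i^2}\Bigr)^p \le \exp\Bigl(\frac{C_2 p\pi^2}{6}\Bigr)\Bigl(\frac{j}{k+1}\Bigr)^{C_1 p}.$$
Summing against $j^{-(1+r)}$ and pulling the constant prefactor out reduces the matter to controlling $\sum_{j=1}^k j^{\beta-1}$, where $\beta := C_1 p - r > 0$ by hypothesis. A direct integral comparison --- splitting the two cases $\beta \ge 1$ (increasing integrand, compare on $[1,k+1]$) and $0<\beta<1$ (decreasing integrand, isolate the $j=1$ term and compare on $[1,k]$) --- produces $\sum_{j=1}^k j^{\beta-1} \le (k+1)^\beta/\beta$ in both cases. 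Combined with the prefactor this yields the bound with constant $\exp(C_2 p \pi^2/6)/(C_1 p - r)$, which is in fact sharper than the stated one; the extra $e^{C_1 p}$ in the statement is convenient slack.

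The main obstacle will simply be keeping clean book-keeping in the integral comparison for (ii) when $\beta < 1$: one must separate the first term and take care not to end up with a prefactor like $1/\beta \cdot (1 - k^{-\beta})$ rather than the clean $1/(C_1 p - r)$. Everything else is routine manipulation of harmonic and Basel-type sums, and the whole proof should fit on half a page.
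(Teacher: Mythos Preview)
Your proposal is correct and follows essentially the same route as the paper: both arguments use $1+x\le e^x$ (equivalently $\log(1+x)\le x$), the integral comparison $\sum_{i=m}^k 1/i \ge \ln\bigl((k+1)/m\bigr)$, the Basel bound $\sum 1/j^2 \le \pi^2/6$, and an integral comparison for $\sum_{j=1}^k j^{\beta-1}$ with $\beta=C_1p-r$. Your observation that the factor $e^{C_1 p}$ in (ii) is unnecessary is also correct---the paper's own inequality $\sum_{i=1}^k i^{C-1}\le (k+1)^C/C$ already yields the sharper constant, so the extra exponential is just slack.
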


\begin{proof}
  The proof relies on the trivial inequality $1 + u \le \exp{u}$ for $u \geq - 1$ and 
  the following two basic inequalities involving (generalized) harmonic numbers
  \begin{align*}
    \ln{(k+1)} - \ln{(m)} \le \sum_{i=m}^k \frac{1}{i} \quad 
    \text{and} \quad
    \sum_{i=1}^k{i^{C-1}} \le \frac{1}{C} (k+1)^C .
  \end{align*}
  The first one follows quickly by treating the sum as a lower Riemann 
  sum approximating the integral $\int_{m}^{k+1} {u^{-1} \diff{u}}$.
  The second one can be proved analogously by approximating the integral 
  $\int_0^{k+1} {u^{C-1}\diff{u}}$
  with an upper ($C<1$) or lower ($C>1$) Riemann sum.

  The condition $4C_2 \ge C_1^2$ implies that all the factors in the product \emph{(i)} are positive. We therefore have that
  $0\leq 1-\frac{C_1}{j} + \frac{C_2}{j^2} \le \exptext{-\frac{C_1}{j}} 
  \exptext{\frac{C_2}{j^2}}$. Thus, it follows that
  \begin{align*}
    \prod_{j= 1}^k \Big(1-\frac{C_1}{j} + \frac{C_2}{j^2}\Big)^{p}
    &\le \exptextB{-C_1 p \sum_{j=1}^k{\frac{1}{j}}} 
    \exptextB{ C_2 p \sum_{j=1}^k{\frac{1}{j^2}}} \\
    &\le \exptextB{-C_1 p \ln{(k+1)}  } \exptextB{\frac{C_2 p\pi^2}{6}},
  \end{align*}
  from which the first claim follows directly. For the second claim, we similarly 
  have
  \begin{align*}
    &\sum_{j=1}^k \frac{1}{j^{1+r}} \prod_{i = j+1}^k \Big(1-\frac{C_1}{i} + 
    \frac{C_2}{i^2}\Big)^{p}
    \le \exptextB{\frac{C_2 p \pi^2}{6}}\sum_{j=1}^k \frac{1}{j^{1+r}} 
    \exptextbigg{-C_1p \sum_{i=j+1}^k{\frac{1}{i}} } ,
  \end{align*}
  where the latter sum can be bounded by
  \begin{align*}
    \sum_{j=1}^k \frac{1}{j^{1+r}} \exptextB{-C_1p \sum_{i=j+1}^k{\frac{1}{i}} } 
    &\le \sum_{j=1}^k \frac{1}{j^{1+r}} \exptextB{-C_1p\ln\Big( 
    \frac{k+1}{j+1}  \Big) } \\
    &\le \sum_{j=1}^k{ \frac{1}{j^{1+r}} 
      \Big(\frac{k+1}{j+1}\Big)^{-C_1p}} \\
    &= (k+1)^{-C_1p} \sum_{j=1}^k{ j^{C_1 p-r-1} \cdot 
    \Big(\frac{j+1}{j}\Big)^{C_1p}} \\
    &\le \frac{2^{C_1p}}{C_1 p-r} (k+1)^{-r}.
  \end{align*}
  The final inequality is where we needed $C_1p > r$, in order to have something 
  better than $j^{-1}$ in the sum.
  \end{proof}

\section{Proof of main theorem}\label{section:main_theorem}

We are now in a position to prove Theorem~\ref{theorem:main_theorem}.

\begin{proof}[Proof of Theorem~\ref{theorem:main_theorem}]
Given the sequence of mutually independent random variables $\xi^k$, we abbreviate the 
random functions $f_k = f(\cdot, \xi^k)$ and $T_{k} = T_{\alpha_k f, \xi_k}$, $k \in \N$. 
Then the scheme can be written as
$w^{k+1} = T_k w^{k}$.  If $T_k w^* = w^*$, we would 
essentially 
only have to invoke Lemma~\ref{lem:same_contraction_factors} and 
Lemma~\ref{lem:contraction_factor} to finish the proof. But due to the 
stochasticity, this does not hold, so we need to be more careful.

We begin by adding and subtracting the term $T_k w^*$ and find that
\begin{align*}
  \|w^{k+1} - w^* \|^2 
  &= \|T_k w^{k} - T_k w^* \|^2
  + 2\inner{T_k w^{k} - T_k w^*}{T_k w^* - w^*} 
  + \|T_k w^* - w^* \|^2.
\end{align*}
By Lemma~\ref{lem:same_contraction_factors} and 
Lemma~\ref{lem:contraction_factor} the expectation $\E_{\xi^k}$ of the 
first term on the right-hand side is bounded by $(1 - 2\mu \alpha_k + 
3\nu^2\alpha_k^2)^{\nicefrac{1}{2}}\|w^{k} - 
w^* \|^2$ while by 
Lemma~\ref{lem:basic_inequalities} the last term is bounded in expectation by 
$\alpha_k^2 \sigma^2$. The second term is the problematic one. We add and subtract 
both $w^{k}$ and $w^*$ in order to find terms that we can control:
\begin{align*}
  &\inner{T_k w^{k} - T_k w^*}{T_k w^* - 
  w^*} \\
  &= \innerb{(T_k - I) w^{k} - (T_k - 
  I)w^*}{(T_k - I)w^*} + \innerb{ w^{k} - w^*}{(T_k - I)w^*} \\
  &\equalscolon I_1 + I_2.
\end{align*}

  In order to bound $I_1$ and $I_2$, we first need to apply the a priori bound from 
  Lemma~\ref{lem:apriori}. This will also enable us to utilize the local Lipschitz 
  condition. First, we notice that due to Lemma~\ref{lem:basic_inequalities}, we find that
  \begin{align*}
    \big(\E_{\xi^k} \big[\|T_kw^*\|^j\big] \big)^{\frac{1}{j}}
    \leq \|w^*\|  + \big(\E_{\xi^k} \big[ \|\nabla f_k(w^*)\|_{H^*}^j\big] 
    \big)^{\frac{1}{j}} 
    \leq \|w^*\|  + \sigma 
  \end{align*}
  is bounded for $j \leq 2^m$. As $T_k$ is a contraction, we also 
  obtain
  \begin{align*}
    \big(\E_k \big[\|T_kw^{k}\|^j\big] \big)^{\frac{1}{j}}
    &\leq \big(\E_k \big[\|T_kw^{k} - T_kw^*\|^j\big] 
    \big)^{\frac{1}{j}} + \big(\E_{\xi^k} \big[\|T_kw^*\|^j\big] 
    \big)^{\frac{1}{j}}\\
    &\leq \big(\E_{k} \big[\|w^{k} - w^*\|^j\big] 
    \big)^{\frac{1}{j}} +  \|w^*\|  + \sigma .
  \end{align*}
  Thus, there exists a random variable $R_1$ such that 
  \begin{align*}
    \max\Big( \max_{j \in \{1,\dots,k\}} \|T_j w^{j}\|, \max_{j \in \{1,\dots,k\}} \|T_j w^*\|\Big) 
    \le R_1,
  \end{align*}
  and $\E_k [R_1^j]$ is bounded for $j \leq 2^m$. For $I_1$, we then obtain that
 \begin{align*}
   I_1 &\le \innerb{(T_k - I) w^{k} - (T_k - I) w^* 
   }{(T_k - I)w^*} \\
   &\le \|\alpha_k \nabla f_k(T_kw^{k}) -  \alpha_k\nabla 
   f_k(T_kw^*)\|_{H^*} \|\alpha_k \nabla f_k(w^*) \|_{H^*}\\
   &\le \alpha_k^2 \Lr{\xi^k}{R_1} \| T_kw^{k} - T_kw^*\| 
   \|\nabla f_k(w^*) \|_{H^*}\\
   &\le \alpha_k^2 \Lr{\xi^k}{R_1} \| w^{k} - w^*\| \|\nabla f_k(w^*) \|_{H^*}, 
 \end{align*}
 where we used the fact that $T_k$ is non-expansive in the last step.
 Taking the expectation, we then have by H\"older's inequality that
 \begin{align*}
   \E_k [I_1 ]
   &\leq \alpha_k^2 \E_k \big[\Lr{\xi^k}{R_1} \| w^{k} - w^*\| \|\nabla f_k(w^*)   
   \|_{H^*}\big]\\
   &\leq \alpha_k^2 \tilde{L}_1
   \big(\E_{k-1} \big[\| w^{k} - w^*\|^{2^m} \big]\big)^{2^{-m}} 
   \big(\E_{\xi_k} \big[\|\nabla f_k(w^*)   
   \|_{H^*}^{2^m} \big]\big)^{2^{-m}},
 \end{align*}
 where
 \begin{align*}
  \tilde{L}_1 =
  \begin{cases}
    \big(\E_k \big[P(R_1)^{\frac{2^{m-1}}{2^{m-1} -1}} 
    \big]\big)^{\frac{2^{m-1} -1}{2^{m-1}}}, \quad &m > 1,\\
    \sup |P(R_1)|, &m = 1.
  \end{cases}
 \end{align*}
 As $P$ is a polynomial of at most order $2^m -2$, the exponent for $P$ is 
 bounded by $\big(\frac{2^{m-1}}{2^{m-1} -1}\big) \big(2^m 
 -2\big) = 2^{m}$. Hence $\tilde{L}_1$ is bounded, and in view of Lemma~\ref{lem:apriori} we get that
 \begin{equation*}
   \E_k[I_1] \le D_1 \alpha_k^2,
 \end{equation*}
where $D_1 \geq 0$ is a constant depending only on $\|w^*\|$, $\|w_1 - w^*\|$, $\sigma$ 
and $\eta$.
For $I_2$, we add and subtract $\alpha_k \iota \nabla f_k w^*$ to get
\begin{align*}
  I_2&= \innerb{ w^{k} - w^*}{(T_k - I)w^*} \\
  &= \innerb{ w^{k} - w^*}{(T_k - I)w^* + \alpha_k \iota \nabla f_k 
  (w^*) }  - \innerb{ w^{k} - w^*}{\alpha_k \iota \nabla f_k (w^*)} .
\end{align*}
Since $w^{k} - w^*$ is independent of $\alpha_k \nabla f_k (w^*)$, it follows 
that
\begin{align*}
  \E_{\xi_k} [\innerb{ w^{k} - w^*}{\alpha_k \iota \nabla f_k (w^*)} ]
  = \innerb{ w^{k} - w^*}{ \E_{\xi_k} [\alpha_k \iota \nabla f_k (w^*)]} = 0.
\end{align*}
Using the Cauchy-Schwarz inequality and Lemma~\ref{lem:basic_inequalities}, we find that
\begin{align*}
  \E_k [I_2] &\le \E_k \big[ \| w^{k} - w^*\| \| \iota^{-1} (T_k -  
  I)w^* + \alpha_k \nabla f_k (w^*)\|_{H^*} \big]\\
  &\le \E_k\big[ \Lr{\xi^k}{R_2} \alpha_k^2 \|w^{k} - w^*\| \| \nabla f_k (w^*)\|_{H^*} 
  \big] \\
  &\le \alpha_k^2 \tilde{L}_2
  \big(\E_{k-1}\big[\|w^{k} - w^*\|^{2^m}\big] \big)^{2^{-m}}
  \big(\E_{\xi_k} \big[\|\nabla f_k(w^*)   
  \|_{H^*}^{2^m} \big]\big)^{2^{-m}},
\end{align*}
where $R_2 = \max (\|w^*\|,\|\nabla f_k (w^*)\|_{H^*})$ and
\begin{align*}
  \tilde{L}_2 =
  \begin{cases}
  \big(\E_k \big[P(R_2)^{\frac{2^{m-1}}{2^{m-1} -1}} 
  \big]\big)^{\frac{2^{m-1} -1}{2^{m-1}}}, \quad &m > 1,\\
  \sup |P(R_2)|, &m = 1.
  \end{cases}
\end{align*}
Just as for $I_1$, we therefore get by Lemma~\ref{lem:apriori} that 
 \begin{equation*}
   \E_k[I_2] \le D_2 \alpha_k^2,
 \end{equation*}
where $D_2 \geq 0$ is a constant depending only on $\|w^*\|$, $\|w_1 - w^*\|$, $\sigma$ 
and $\eta$.

Summarising, we now have
\begin{align*}
  \E_k\big[ \|w^{k+1} - w^* \|^2 \big] &\le \tilde{C}_k \E_{k-1}\big[\|w^{k} - w^* 
  \|^2\big] +  \alpha_k^2 D 
\end{align*}
with $\tilde{C}_k = \big(1 - 2\mu \alpha_k + 3\nu^2\alpha_k^2 
\big)^{\nicefrac{1}{2}}$ 
and $D = \sigma^2 + D_1 + D_2$.
Recursively applying the above bound yields
\begin{align*}
  \E_k\big[ \|w^{k+1} - w^* \|^2 \big] \le \prod_{j=1}^k{ \tilde{C}_j \|w_1 - w^* 
  \|^2} 
  + D \sum_{j=1}^k{ \alpha_j^2 \prod_{i=j+1}^k{ \tilde{C}_i}  }.
\end{align*}
Applying Lemma~\ref{lem:algebraic_inequalities} (i) and (ii) with 
$p=\nicefrac{1}{2}$, $r=1$, $C_1 = 2\mu\eta$ and $C_2 = 3\nu^2\eta^2$ then shows that 
\begin{align*}
  \prod_{j=1}^k{ \tilde{C}_j} \le \exptextB{\frac{\nu^2\eta^2 
      \pi^2}{4}} (k+1)^{-\mu\eta}
\end{align*}
and
\begin{align*} 
  \sum_{j=1}^k{ \alpha_j^2 \prod_{i=j+1}^k{ \tilde{C}_i}}  
    \le \eta^2 2^{\mu \eta} \exptextB{\frac{\nu^2\eta^2 \pi^2}{4}} \frac{1}{ \mu \eta - 1} 
    (k+1)^{-1}.
\end{align*}
Thus, we finally arrive at
\begin{equation*}
  \E_k\big[ \|w^{k+1} - w^*\|^2 \big] \le \frac{C}{k+1},
\end{equation*}
where $C$ depends on $\|w^*\|$, $\|w_1 - w^*\|$,  $\mu$, $\sigma$ and $\eta$.
\end{proof}

\begin{remark}
  The above proof is complicated mainly due to the stochasticity and due to the 
  lack of strong convexity. We consider briefly the simpler, deterministic,  
  full-batch, case with
  \begin{equation*}
    w^{k+1} = w^k - \alpha_k \nabla F(w^{k+1}),
  \end{equation*}
where $F$ is strongly convex with convexity constant $\mu$. Then it can 
easily be shown that 
\begin{align*}
    \inner{\nabla F(v) - \nabla F(w)}{v - w} \geq \mu \| v - w\|^2.
  \end{align*}
This means that
\begin{align*}
    \|\big( I + \alpha \nabla F \big)^{-1}(v) - \big( I + \alpha \nabla F
    \big)^{-1}(w)\|
    \leq (1 + \alpha \mu )^{-1} \|v - w\|,
  \end{align*}
i.e.\ the resolvent is a strict contraction. Since $\nabla F(w^*) = 0$, it follows that
$\big( I + \alpha \nabla F \big)^{-1} w^* = w^*$ so a simple iterative argument 
shows that
\begin{align*}
    \|w^{k+1} - w^*\|^2
    \leq \prod_{j=1}^k \big(1 + \alpha_j \mu \big)^{-1} \| w_1 -
    w^*\|^2.
  \end{align*}
Using $(1 + \alpha \mu)^{-1} \le 1 - \mu \alpha + \mu^2\alpha^2$, 
choosing $\alpha_k = \eta/k$ and applying 
Lemma~\ref{lem:algebraic_inequalities} then shows that
\begin{equation*}
  \|w^{k+1} - w^*\|^2 \le C (k+1)^{-1}
\end{equation*}
for appropriately chosen $\eta$. In particular, these arguments do not require the 
Lipschitz continuity of $\nabla F$, which is needed in the stochastic case to 
handle the terms arising due to $\nabla f(w^*, \xi) \neq 0$.

\end{remark}

\section{Numerical experiments}\label{section:numerical_experiments}
In order to illustrate our results, we set up a numerical experiment along the lines 
given in the introduction. In the following, let $H = L^2(0,1)$ be the Lebesgue 
space of square integrable functions equipped with the usual inner product and 
norm. Further, let $x_j^i \in H$ for $i = 1$, $j = 1, \dots, \lfloor\frac{n}{2}\rfloor$ and $i = 
2$, $j = \lfloor\frac{n}{2}\rfloor + 1, \ldots, n$ be elements from two different classes within 
the space $H$. In particular, we 
choose each $x_j^1$ to be a polynomial of degree $4$ and each $x_j^2$ to 
be a trigonometric function with bounded frequency for $j = 1,\dots,n$. The 
polynomial coefficients 
and the frequencies were randomly chosen. 

We want to classify these functions as either polynomial or trigonometric. To do 
this, we set up an affine (SVM-like) classifier by choosing the loss function 
$\ell(h,y)= \ln(1 + \exp{-hy})$ and the prediction function $h( [w,\overline{w}], 
x) = \inner{w}{x} + \overline{w}$ with $[w,\overline{w}] \in L^2(0,1) \times \R$. 
Without $\overline{w}$, this would be linear, but by
including $\overline{w}$ we can 
allow for a constant bias term and thereby make it affine. 
We also add a regularization term 
$\frac{\lambda}{2} \|w\|^2$ (not including the bias), such that the minimization 
objective is
\begin{equation*} 
  F([w,\overline{w}], \xi) = \frac{1}{n} { \sum_{j = 1}^{n}{\ell(h( [w,\overline{w}], x_j), 
  y_j)} + \frac{\lambda}{2} \|w\|^2 },
\end{equation*}
where $[x_j, y_j] = [x^1_j, -1]$ if $j \le \lfloor\frac{n}{2}\rfloor$ and $[x_j, y_j] = [x^2_j, 1]$ 
if $j > \lfloor\frac{n}{2}\rfloor$, similar to Equation~\eqref{eq:ML_functional}. In one step of 
SPI, we use the function
\begin{equation*} 
  f([w,\overline{w}], \xi) = \ell(h( [w,\overline{w}], x_{\xi}), y_{\xi}) + 
  \frac{\lambda}{2} 
  \|w\|^2 ,
\end{equation*}
with a random variable $\xi \colon \Omega \to \{1,\dots,n\}$.
Since we cannot do computations 
directly in the infinite-dimensional space, we discretize all the functions using 
$N$ equidistant points in $[0,1]$, omitting the endpoints. For each $N$, this 
gives us an optimization problem 
on $\mathbb{R}^N$, which approximates the problem on $H$. 

For the implementation, we make use of the following computational idea, which 
makes SPI essentially as fast as SGD. Differentiating the chosen $\ell$ and $h$ 
shows that the scheme is given by the iteration
\begin{align*}
  [w,\overline{w}]^{k+1} 
  = [w,\overline{w}]^{k} + c_k [x_k,1] - \lambda \alpha_k [w,0]^{k+1},
\end{align*}
where $c_k =  \frac{\alpha_k  y_k }{ 1 + \exptext{ \inner{w^{k+1}}{x_k} y_k + 
\overline{w}^{k+1} y_k}}$. This is equivalent to
\begin{align*}
  w^{k+1} 
  = \frac{1}{1 + \alpha_k \lambda} \big(w^{k} +  c_k  x_k\big)
  \quad 
  \text{and}
  \quad
  \overline{w}^{k+1} 
  = \overline{w}^{k} +  c_k.
\end{align*}
Inserting the expression for $[w,\overline{w}]^{k+1}$ in the definition of $c_k$, 
we obtain that
\begin{equation*}
  c_k =   \frac{\alpha_k y_k}{ 1 + \exptextB{ \frac{1}{1 + \alpha_k 
        \lambda} \inner{w^{k} + c_k  x_k}{x_k} y_k + (\overline{w}^{k} +  c_k) y_k} }.
\end{equation*}
We thus only need to solve one scalar-valued equation. This is at most twice as 
expensive as SGD, since the equation solving is essentially free and the only 
additional costly term is  $\inner{x_k}{x_k}$ (the term $\inner{w^k}{x_k}$ of 
course has to be computed also in SGD). By storing the scalar result, the extra 
cost will be essentially zero if the same sample is revisited. We note that 
extending this approach to larger batch-sizes is straightforward. If the batch size 
is $B$, then one has to solve a $B$-dimensional equation.

Using this idea, we implemented the method in Python and tested it on a series of 
different discretizations. We took $n = 1000$, i.e.\ $500$ functions of each type, 
$M = 10000$ time steps and discretization parameters $N = 100 \cdot 2^i$ for $i = 1, 
\ldots, 11$ to approximate the infinite dimensional space $L^2(0,1)$. We used 
$\lambda = 10^{-3}$ and the initial step size $\eta = \frac{2}{\lambda}$, since in 
this case it can be shown that $\mu \ge \lambda$. There is no closed-form 
expression for the exact minimum $w^*$, so instead we ran SPI with $10 M$ time 
steps and used the resulting reference solution as an approximation to 
$w^*$. Further, we approximated the expectation $\E_k$ by running the 
experiment $10$ times and averaging the resulting errors. This may seem like a 
small number of paths but using more (or less) 
such paths does not seem to influence the results much, indicating that the 
convergence is likely actually almost surely rather than only in expectation. In 
order to 
compensate for the vectors becoming longer as $N$ increases, we measure the 
errors in the RMS-norm $\| \cdot \|_N = \| \cdot \|_{\R^N} / \sqrt{N+1}$. As $N 
\to \infty$, this tends to the $L^2$ norm.

Figure~\ref{fig:errors_SPI} shows the resulting approximated errors 
$\E_{k-1}[\|w^{k} - w^*\|_N^2]$. As expected, we observe convergence 
proportional to $\nicefrac{1}{k}$ for all $N$. The error constants do vary to a 
certain extent, 
but they are reasonably similar. As the problem approaches the 
infinite-dimensional case, they vary less. In order to decrease the computational 
requirements, we only compute statistics at every $100$ time steps, this is why 
the plot starts at $k = 100$.

\begin{figure}[ht]
  \begin{center}
    \includegraphics[width=\columnwidth]{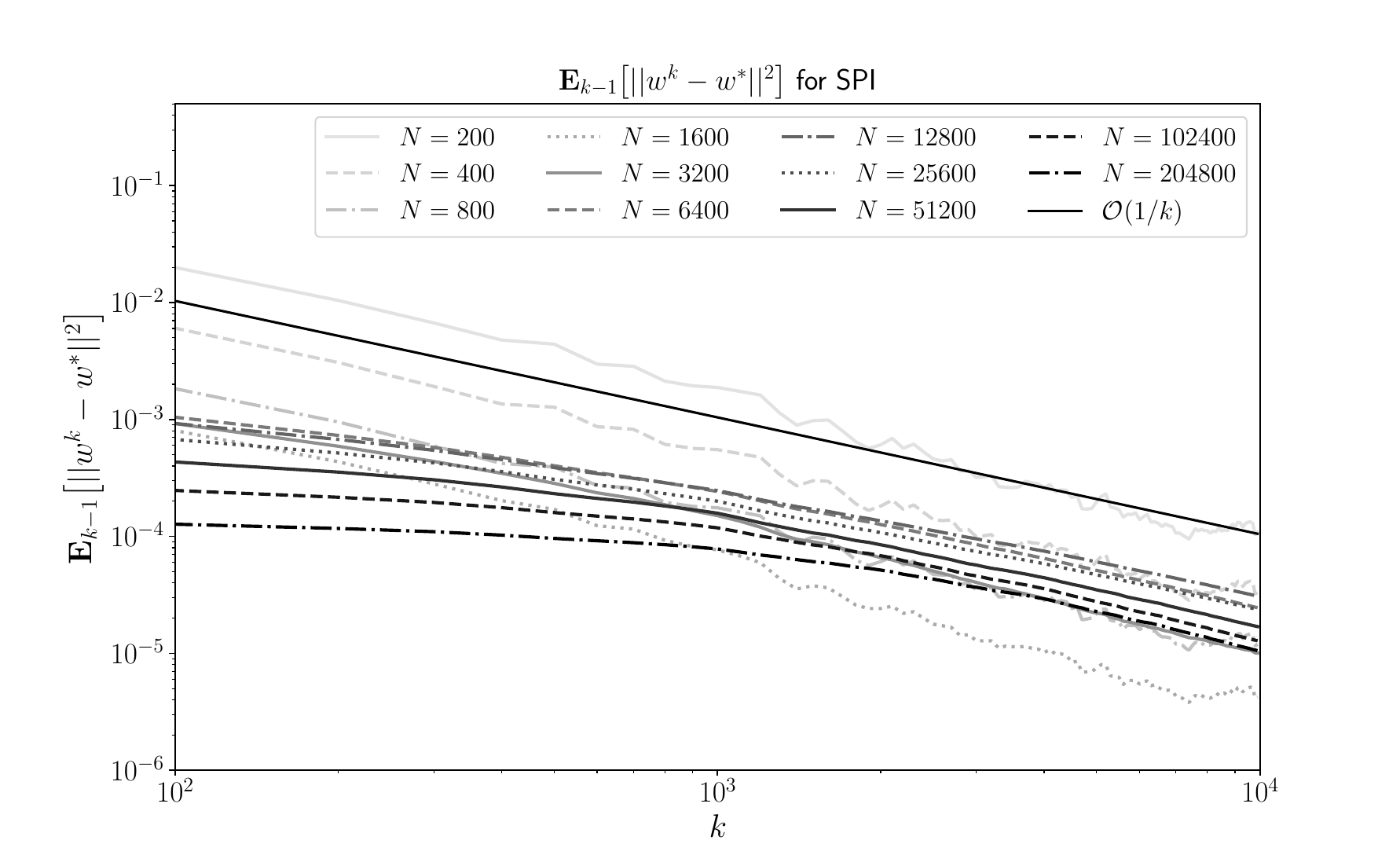}
  \end{center}
  \caption{Approximated errors $\E_{k-1}[\|w^{k} - w^*\|_N^2]$ for the SPI 
  method, measured in RMS-norm, for discretizations with varying number of grid 
  points $N$. Statistics were only computed at every $100$ time steps, this is why 
  the plot starts at $k = 100$. The $1/k$-convergence is clearly seen by 
  comparing to the uppermost solid black reference line.}
\label{fig:errors_SPI}
\end{figure}

In contrast, redoing the same experiment but with the explicit SGD method 
instead results in Figure~\ref{fig:errors_SGD}. We note that except for $N = 200$ 
and $N=400$, the method does not converge at all, likely because as $N$ grows 
the problem also becomes more stiff. Even when it does converge, the errors are 
much larger than in Figure~\ref{fig:errors_SPI}. Many more steps would be 
necessary to reach the same accuracy as SPI. Since our implementations are 
certainly not optimal in any sense, we do not show a comparison of computational 
times here. They are, however, very similar, meaning that SPI is more efficient than 
SGD for this problem.

\begin{figure}[ht]
  \begin{center}
    \includegraphics[width=\columnwidth]{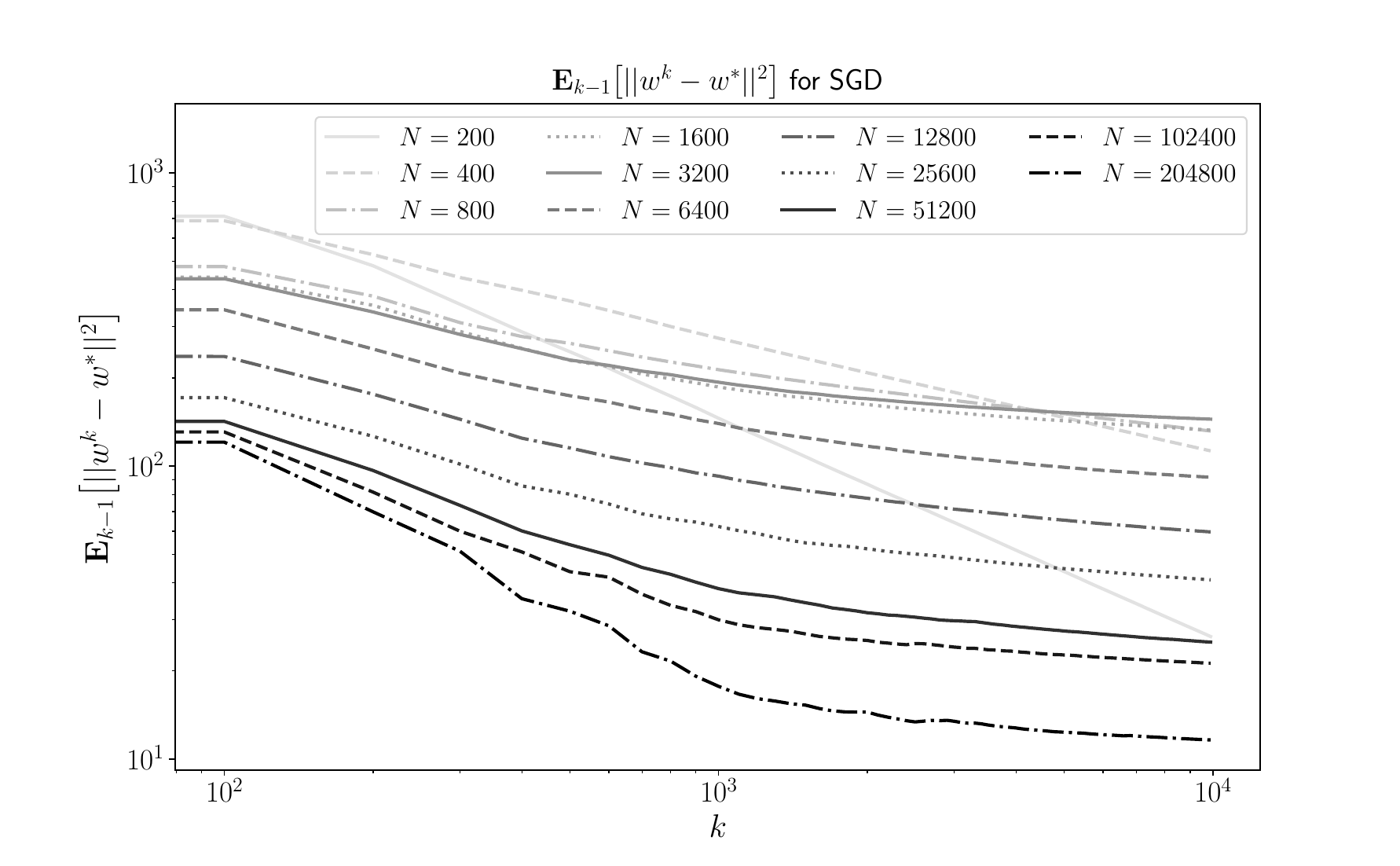}
  \end{center}
  \caption{Approximated errors $\E_{k-1}[\|w^{k} - w^*\|_N^2]$ for the 
  SGD method, measured in RMS-norm, for discretizations with varying number of 
  grid points $N$. Statistics were only computed at every $100$ time steps, this is 
  why the plot starts at $k = 100$. Except for $N = 200$ and $N=400$, the 
  method does not converge at all. Even when it does, the errors are much larger 
  than in Figure~\ref{fig:errors_SPI}.}
\label{fig:errors_SGD}
\end{figure}

\section{Conclusions}\label{section:conclusions}
We have rigorously proved convergence with an optimal rate for the stochastic 
proximal iteration method in a general Hilbert space. 
This improves the analysis situation in two ways. Firstly, by providing an 
extension of similar results in a finite-dimensional setting to the 
infinite-dimensional case, as well as extending these to less bounded operators. 
Secondly, by improving on similar infinite-dimensional results that only achieve 
convergence, without any error bounds. The latter improvement comes at the cost 
of stronger assumptions on the cost functional. Global Lipschitz continuity of the 
gradient is, admittedly, a rather strong assumption. However, as we have 
demonstrated, this can be replaced by local Lipschitz continuity where the 
maximal growth of the Lipschitz constant is determined by higher moments of the 
gradient applied to the minimum. This is a weaker condition. Finally, we have seen 
that the theoretical results are applicable also in practice, as demonstrated by the 
numerical results  in the previous section.

%\bibliographystyle{plain}
%\bibliography{../lit/lit.bib}

\end{document}